\newdimen\bibspace
\renewenvironment{thebibliography}[1]{%
 \section*{\refname %or \bibname if you use ``book'' as the documentclass
       \@mkboth{\MakeUppercase\refname}{\MakeUppercase\refname}}%
     \list{\@biblabel{\@arabic\c@enumiv}}%
          {\settowidth\labelwidth{\@biblabel{#1}}%
           \leftmargin\labelwidth
           \advance\leftmargin\labelsep
           \itemsep\bibspace
           \parsep\z@skip     %
           \@openbib@code
           \usecounter{enumiv}%
           \let\p@enumiv\@empty
           \renewcommand\theenumiv{\@arabic\c@enumiv}}%
     \sloppy\clubpenalty4000\widowpenalty4000%
     \sfcode`\.\@m}
    {\def\@noitemerr
      {\@latex@warning{Empty `thebibliography' environment}}%
     \endlist}
\newtheorem{thm}{Theorem}[section]
\newtheorem{lem}[thm]{Lemma}
\newtheorem{prop}[thm]{Proposition}
\newtheorem{cor}[thm]{Corollary}
\newtheorem{rem}[thm]{Remark}
\def\Xint#1{\mathchoice
  {\XXint\displaystyle\textstyle{#1}}%
  {\XXint\textstyle\scriptstyle{#1}}%
  {\XXint\scriptstyle\scriptscriptstyle{#1}}%
  {\XXint\scriptscriptstyle\scriptscriptstyle{#1}}%
  \!\int}
\def\XXint#1#2#3{{\setbox0=\hbox{$#1{#2#3}{\int}$}
  \vcenter{\hbox{$#2#3$}}\kern-.5\wd0}}
\def\dashint{\Xint-}
\newcommand{\al}{\alpha}                \newcommand{\lda}{\lambda}
\newcommand{\om}{\Omega}                \newcommand{\pa}{\partial}
\newcommand{\va}{\varepsilon}           \newcommand{\ud}{\mathrm{d}}
\newcommand{\be}{\begin{equation}}      \newcommand{\ee}{\end{equation}}
\newcommand{\Lda}{\Lambda}              
\newcommand{\R}{\mathbb{R}}
\begin{document}

\title{\textbf{The critical semilinear elliptic equation with isolated boundary singularities}
\bigskip}

\author{\medskip
Jingang Xiong\footnote{Supported in part by NSFC 11501034, key project of NSFC 11631002 and NSFC 11571019.} }

\date{}

\fancyhead{}
\fancyhead[CO]{\textsc{Isolated boundary singularity}}
\fancyhead[CE]{\textsc{J. Xiong}}

\fancyfoot{}

\fancyfoot[CO, CE]{\thepage}

\renewcommand{\headrule}{}

\maketitle

\begin{abstract} We establish quantitative asymptotic behaviors for nonnegative solutions of the critical semilinear  equation $-\Delta u=u^{\frac{n+2}{n-2}}$ with isolated boundary singularities, where $n\ge 3$ is the dimension.

\end{abstract}

\section{Introduction}

The internal isolated singularity for positive solutions of the semilinear equation $-\Delta u= u^{p}$ has been very well understood, where $\Delta$ is the Laplace operator,  $1<p\le \frac{n+2}{n-2}$ is a parameter and $n\ge 3$ is the dimension. See Lions \cite{Lions} for $1<p<\frac{n}{n-2}$, Gidas-Spruck \cite{GS} for $\frac{n}{n-2}< p<\frac{n+2}{n-2}$, Aviles \cite{A} for $p=\frac{n}{n-2}$, Caffarelli-Gidas-Spruck \cite{CGS} for $\frac{n}{n-2}\le p\le \frac{n+2}{n-2}$ and Korevaar-Mazzeo-Pacard-Schoen \cite{KMPS} for $p= \frac{n+2}{n-2}$. The Sobolev critical exponent $p= \frac{n+2}{n-2}$ case is of particular interest,  because the equation connects to the Yamabe problem and the conformal invariance leads to a richer isolated singularity structure. See also Li \cite{Li06} and Han-Li-Teixeira \cite{HLT} for conformally invariant fully nonlinear elliptic equations.

The Dirichlet boundary isolated singularity for the same equation has also been studied in many cases. Asymptotic behaviors of singular solutions have  been established by Bidaut-V\'eron-Vivier \cite{BVi} for $1<p<\frac{n+1}{n-1}$  and Bidaut-V\'eron-Ponce-V\'eron \cite{BPV, BPV_a} for $\frac{n+1}{n-1}\le p<\frac{n+2}{n-2}$. Existence of singular solutions vanishing on boundaries of bounded domains except finite points has been obtained by del Pino-Musso-Pacard \cite{dMP} for $p<\frac{n+2}{n-2}$.  The exponent $\frac{n+1}{n-1}$ corresponding to $\frac{n}{n-2}$ for the interior singularity was discovered  by Br\'ezis-Turner \cite{BT}. Under a blow up rate assumption Bidaut-V\'eron-Ponce-V\'eron \cite{BPV, BPV_a} obtain refined asymptotic behaviors for the supercritical case $\frac{n+2}{n-2}<p<\frac{n+1}{n-3}$. We refer to \cite{BPV} and references therein for related results on boundary singularity.

This paper is concerned with the remaining critical case: $p=\frac{n+2}{n-2}$. The conformal invariance again produces additional complexity and the boundary condition makes the asymptotic analysis of \cite{CGS} and \cite{KMPS} fail. As said in Bidaut-V\'eron-Ponce-V\'eron \cite{BPV_a}, one can show

\begin{prop}\label{prop:2} Denote $\R^n_+=\{x=(x',x_n)\in \R^n: x_n>0\}$. Let $u \in C^2(\R^n_+)\cap C(\bar \R^n_+\setminus \{0\} )$ be a nonnegative solution of
\be\label{eq:main-1}
\begin{cases}
\begin{aligned}
-\Delta u&=n(n-2) u^{\frac{n+2}{n-2}}  & \quad &\mbox{in }\R^n_+,\\
u &= 0 &\quad &\mbox{on } \pa  \R^n_+ \setminus \{0\}.
\end{aligned}
\end{cases}
\ee
Suppose $0$ is a non-removable singularity of $u$, then $u$ depends only on $|x'|$ and $x_n$, and $\pa_r u(r,x_n)< 0$ for all $r=|x'|>0$.

\end{prop}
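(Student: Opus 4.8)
The plan is to exploit the conformal invariance of the equation together with the method of moving planes, in the form adapted to the half-space geometry. The key observation is that $\R^n_+$ with an isolated boundary singularity at $0$ is conformally equivalent, via the Kelvin transform $x \mapsto x/|x|^2$, to itself, but the singularity at $0$ is exchanged with the singularity at $\infty$. Thus the first step is to set $v(x) = |x|^{-(n-2)} u(x/|x|^2)$, which again solves \eqref{eq:main-1} on $\R^n_+$, is smooth away from $0$ and $\infty$, and vanishes on $\pa\R^n_+ \setminus \{0\}$. Since $0$ is a non-removable singularity of $u$, a standard argument (Böcher-type, or a Harnack/capacity estimate near the boundary point) shows $v$ does not decay at infinity, and a lower bound of the form $v(x) \ge c\, x_n/|x|^n$ near $0$ (equivalently $u(x) \ge c\, x_n$ near infinity) can be extracted; this controls the behavior needed to start the moving-plane scheme.

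The second step is the moving-plane argument itself. Fixing a direction $e$ tangent to $\pa\R^n_+$, I would slide hyperplanes $\{x\cdot e = \lda\}$ from $\lda = +\infty$ inward and compare $u$ with its reflection $u_\lda$ across the hyperplane. The boundary condition $u = 0$ on $\pa\R^n_+\setminus\{0\}$ is preserved under reflection in a tangential direction, so the reflected function is again a subsolution/supersolution on the appropriate region and vanishes on the reflected portion of the boundary. Using the decay/growth information from Step 1 (to control the behavior near the two singular points $0$ and $\infty$, which is where the maximum principle on unbounded domains needs care), one shows the plane can be moved all the way until it passes through the origin, yielding symmetry of $u$ under reflection $x\cdot e \mapsto -x\cdot e$ for every tangential $e$; since $e$ is arbitrary this gives that $u$ depends only on $|x'|$ and $x_n$. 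Monotonicity $\pa_r u < 0$ for $r>0$ is then the standard byproduct of the moving-plane sweep (strict inequality from the Hopf lemma / strong maximum principle applied to the difference $u_\lda - u$, which cannot vanish identically once $\lda$ is not the symmetry value).

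The third, more delicate point is handling the two singularities simultaneously. Unlike the interior case of \cite{CGS}, here the singular set $\{0\}\cup\{\infty\}$ lies on the boundary, so when the moving plane is tangential it never "hits" the singular points in its interior, but the reflected singular point can enter the region being compared. The clean way around this is to carry out the moving-plane method on the Kelvin-symmetrized configuration: one shows $u$ is symmetric with respect to some hyperplane through $0$ in each tangential direction, and independently that $v$ (the Kelvin transform) is too; matching these forces the symmetry hyperplanes to be the coordinate ones through the origin. Alternatively, one applies the moving-plane method in spheres / uses the conformal vector fields directly.

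I expect the main obstacle to be precisely this last step: ensuring the maximum principle comparisons go through uniformly near both $0$ and $\infty$ with only the crude a priori bounds available (no blow-up rate is assumed, unlike in \cite{BPV, BPV_a}), and in particular establishing the sharp enough lower bound $u \gtrsim x_n$ near the singularity so that the reflected solution cannot escape control. The conformal invariance is what makes this tractable: it lets one trade a bad estimate at $0$ for a manageable one at $\infty$, and the fact that the equation is \emph{critical} is used essentially here, since it is exactly the exponent $\frac{n+2}{n-2}$ for which the Kelvin transform preserves the equation.
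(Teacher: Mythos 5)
Your overall philosophy (conformal invariance is essential, one needs a lower bound of Böcher type to control the behavior at the singular points, and a reflection argument of Alexandrov type should finish the job) is in the right spirit, and you correctly flag the central difficulty: the reflected singular point enters the comparison region. However, your primary vehicle, moving planes in a tangential direction $e$ from $\lda=+\infty$, does not get off the ground, for exactly the reason you worry about. For any $\lda>0$, the reflection of $0$ across $\{y\cdot e=\lda\}$ is the boundary point $2\lda e\in\Sigma_\lda:=\{y\cdot e>\lda\}$. Near $2\lda e$ the reflected function $u_\lda(y)=u(y^\lda)$ blows up (since $0$ is a non-removable singularity), while $u$ vanishes on $\pa\R^n_+$ near $2\lda e$. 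Hence $u_\lda>u$ in a neighborhood of $2\lda e$ for \emph{every} $\lda>0$, so the standard starting step fails: there is no range of $\lda$, however large, for which $u_\lda\le u$ in all of $\Sigma_\lda$. The Kelvin-symmetrization fix you sketch is too vague to repair this; there is no natural way to split the proof into two plane-symmetry statements (for $u$ and for $v$) and then "match" them, because neither one is available.

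The paper resolves this by using the moving-spheres method rather than moving planes. One shows, for every boundary point $x\in\pa\R^n_+$ with $x\neq 0$, that $u_{x,\lda}\le u$ on $\{|y-x|\ge\lda\}\cap\R^n_+$ for all $0<\lda<|x|$, where $u_{x,\lda}$ is the Kelvin transform of $u$ about $\pa B_\lda(x)$. The point is that for $\lda<|x|$ the inversion maps $0$ to a point inside $B_\lda(x)$, so $u_{x,\lda}$ is regular on the comparison region, and the singularity of $u$ at $0$ (which lies in the comparison region) only helps the inequality, since $u\to\infty$ there while $u_{x,\lda}$ stays bounded. The starting step uses exactly the exterior lower bound $u(y)\gtrsim y_n/|y-x|^n$ (Lemma~\ref{lem:exterior} together with the boundary Harnack inequality of Lemma~\ref{lem:boundary-nonsing}), which matches the estimate you propose in Step 1; and the continuation to $\bar\lda_x=|x|$ is done with the narrow-domain technique. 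Finally, taking $x=Re$, $\lda=R-a$, and sending $R\to\infty$, the sphere reflection degenerates to the plane reflection across $\{y\cdot e=a\}$, yielding $u(y)\ge u(y-2(y\cdot e-a)e)$ for $y\cdot e<a$, for every $a>0$ and every tangential unit vector $e$; letting $a\to 0^+$ gives the axial symmetry and the strict monotonicity $\pa_r u<0$. So the plane comparisons you want are obtained, but only as a limit of sphere comparisons, never directly. You did mention moving spheres as an "alternative" route; that alternative is in fact the proof, and you should replace the moving-plane scheme with it and drop the Kelvin-symmetrization matching step entirely. One small correction: the lower bound at infinity that one actually needs is $u(x)\gtrsim x_n/|x|^n$, not $u(x)\gtrsim x_n$; the latter is a growth estimate that does not follow merely from non-removability of the singularity at $0$.
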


Note that nothing about the behavior of $u$ at infinity is assumed in Proposition \ref{prop:2}.

Let $u$ be a solution of \eqref{eq:main-1} and define $U(t,\theta):=|x|^{\frac{n-2}{2}} u(|x|\cdot \theta)$ with $t=-\ln |x|$. Then we have
\be\label{eq:cylinder}
\pa_{tt}^2 U+\Delta_{\mathbb{S}^{n-1}} U-\frac{(n-2)^2}{4} U+n(n-2) U^{\frac{n+2}{n-2}}= 0\quad \mbox{on }\R \times \mathbb{S}^{n-1}_+,
\ee
\be\label{eq:cylinder-1}
U= 0\quad \mbox{on }\R\times \pa  \mathbb{S}^{n-1}_+,
\ee
where $\mathbb{S}^{n-1}_+=\{\theta=(\theta_1,\dots, \theta_n)\in \mathbb{S}^{n-1}:\theta_n>0\}$. By Proposition \ref{prop:2}, $U(t,\theta)=U(t,\theta_n)$. In contrast to the internal singularity studied by
Caffarelli-Gidas-Spruck \cite{CGS} and Korevaar-Mazzeo-Pacard-Schoen \cite{KMPS}, we lose ODE analysis to classify all solutions of equation \eqref{eq:cylinder}-\eqref{eq:cylinder-1}. del Pino-Musso-Pacard \cite{dMP} conjectured that there exists a one-parameter family of solutions of \eqref{eq:cylinder}-\eqref{eq:cylinder-1}.  Bidaut-V\'eron-Ponce-V\'eron \cite{BPV, BPV_a} proved that there exists a unique $t$-independent solution. Existence of $t$-dependent solutions and a priori estimates are left open.

Let $\psi$ be a $C^2$ function in  $\R^{n-1}$ satisfying
\[
\psi(0)=0, \quad\nabla \psi(0)=0.
\]
Let $Q_R= \{x=(x',x_n):x_n >\psi(x')\} \cap B_R$ and $\Gamma_R= \{x=(x',x_n):x_n =\psi(x')\} \cap B_R$, where $B_R$ is the open ball center at $0$ with radius $R$.
We consider nonnegative solutions of
\be\label{eq:main}
\begin{cases}
\begin{aligned}
-\Delta u&=n(n-2) u^{\frac{n+2}{n-2}}  & \quad &\mbox{in }Q_1,\\
u &= 0 &\quad &\mbox{on } \Gamma_1\setminus \{0\}.
\end{aligned}
\end{cases}
\ee

\begin{thm} \label{thm:1} Let $u\in C^2(\bar Q_1 \setminus \{0\})$ be a nonnegative solution of \eqref{eq:main}. Then for each $0<\gamma<1$ there exists a constant $C(\gamma)\ge 1$ such that for all $x\in Q_{1/2}$ with $\frac{d(x)}{|x|}\le \gamma$,
\be\label{5}
u(x)\le C(\gamma) d(x) |x|^{-\frac{n}{2}}
\ee
and
\be\label{7}
u(x',x_n)=\bar u(|x'|,x_n)(1+O(|x|)) \quad \mbox{if }x_n>\frac{1}{\delta}\max_{|y'|=|x'|}| \psi(y')|,
\ee
where $d(x)= dist(x,\Gamma_1)$, $\bar u(x',x_n)=\dashint_{\mathbb{S}^{n-2}}u(|x'|\theta, x_n)\,\ud \theta$, $O(|x|)\le C(\gamma)|x|$ as $x\to 0$, and $\delta>0$ depends only on the $C^2$ norm of $\psi$.

If the above inequality \eqref{5} holds for $\gamma=1$, then either $0$ is a removable singularity or there exists a constant $C>0$ such that
\be\label{6}
u(x)\ge \frac{1}{C}d(x) |x|^{-\frac{n}{2}} \quad \mbox{for all }x\in Q_{1/2}.
\ee
Furthermore, \eqref{7} holds for all $x\in Q_{1/2}$.
\end{thm}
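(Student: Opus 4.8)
The plan is to reduce the general domain $Q_1$ to the model half-space case \eqref{eq:main-1} via a blow-up and flattening argument, then transplant the asymptotic analysis onto the cylinder $\mathbb{R}\times\mathbb{S}^{n-1}_+$.

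The strategy is to treat the three assertions in order: the upper bound \eqref{5}, the matching asymptotic \eqref{7}, and then (under the hypothesis that \eqref{5} extends to $\gamma=1$) the lower bound \eqref{6} and the full version of \eqref{7}.

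First I would establish the upper bound \eqref{5}. The natural device is the function $v(x)=d(x)^{-1}|x|^{n/2}u(x)$ and a doubling/scaling argument: suppose \eqref{5} fails along a sequence $x_k\to 0$ with $d(x_k)/|x_k|\le\gamma$; rescale by $\lambda_k=|x_k|$, setting $u_k(y)=\lambda_k^{(n-2)/2}u(\lambda_k y)$, which solves the same equation on a domain $\lambda_k^{-1}Q_1$ whose boundary, after the coordinate change straightening $\Gamma$, converges in $C^2_{loc}$ to $\partial\mathbb{R}^n_+$ (here the hypotheses $\psi(0)=0$, $\nabla\psi(0)=0$ are exactly what make $\lambda_k^{-1}\psi(\lambda_k\cdot)\to 0$). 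Standard elliptic boundary estimates plus a Harnack inequality up to the flat portion of the boundary give local bounds on $u_k$ away from $0$ and $\infty$ once we use the finiteness of the singularity; a contradiction with the blow-up of $v$ at $x_k$ follows by the usual Gidas–Spruck/CGS-type selection of a maximal point, using that the limiting equation on $\mathbb{R}^n_+\setminus\{0\}$ — controlled by Proposition \ref{prop:2} — cannot support such unbounded behavior relative to the barrier $d(x)|x|^{-n/2}$. The Hopf lemma on $\Gamma_1$ guarantees $u\lesssim d(x)$ near regular boundary points, so the content of \eqref{5} is really the $|x|^{-n/2}$ rate.

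Next, for \eqref{7}, I would pass to the cylindrical variable: with $U(t,\theta)=|x|^{(n-2)/2}u(|x|\theta)$, $t=-\ln|x|$, and writing $\bar U(t,\theta_n)$ for the average over $\mathbb{S}^{n-2}$, one shows $\|U(t,\cdot)-\bar U(t,\cdot)\|_{C^2(\mathbb{S}^{n-1}_+)}=O(e^{-t})$ as $t\to\infty$. The mechanism is: the non-radial part $w=U-\bar U$ satisfies a linear equation with potential $n(n+2)U^{4/(n-2)}$ and zero average on each $\mathbb{S}^{n-2}$-orbit; the $\gamma<1$ upper bound \eqref{5} gives $U\le C$ on the region $\{\theta_n\ge\gamma'\}$, so the relevant eigenvalue gap on the orthogonal complement of the $\mathbb{S}^{n-2}$-invariant functions forces exponential decay of $w$ in that interior region. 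The restriction $x_n>\delta^{-1}\max_{|y'|=|x'|}|\psi(y')|$ in the statement is precisely the translation of "stay in the region where \eqref{5} applies with some $\gamma<1$ after flattening," so one does not yet control the full cone. I expect this eigenvalue-gap/decay estimate — making the $O(e^{-t})$, i.e. $O(|x|)$, uniform and tracking its dependence on $\gamma$ — to be the main technical obstacle, since it requires careful interior Schauder estimates for $U$ on an expanding cylinder together with a Fourier decomposition in the $\mathbb{S}^{n-2}$ variable.

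Finally, assuming \eqref{5} holds with $\gamma=1$: then $u\le C\,d(x)|x|^{-n/2}$ on all of $Q_{1/2}$, so after flattening, $U\le C$ on the whole $\mathbb{R}\times\mathbb{S}^{n-1}_+$ for large $t$; bootstrapping gives uniform $C^2$ bounds on $U(t,\cdot)$, hence the non-removable alternative means $\liminf_{t\to\infty}\max_\theta U(t,\theta)>0$. For the lower bound \eqref{6} I would use a moving-plane-free argument: if $0$ is not removable, construct a subsolution — e.g. compare $U$ from below with a small multiple of the first Dirichlet eigenfunction $\varphi_1(\theta)$ on $\mathbb{S}^{n-1}_+$ (which behaves like $\theta_n$ near the boundary), solving $\Delta_{\mathbb{S}^{n-1}}\varphi_1=-\mu_1\varphi_1$ — and show via the maximum principle that $U(t,\theta)\ge c\,\varphi_1(\theta)$ for all large $t$, which unwinds to $u(x)\ge \frac1C d(x)|x|^{-n/2}$. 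Positivity of the constant $c$ uses that $U$ cannot decay to zero (else the singularity is removable, by a standard removable-singularity criterion once $U\to 0$). Once both bounds hold, $\theta_n\le C d(x)/|x|$ is bounded away from $0$ is no longer needed, so the decay estimate for $w=U-\bar U$ now runs on the full $\mathbb{S}^{n-1}_+$, yielding \eqref{7} for all $x\in Q_{1/2}$; the Hopf lemma near $\partial\mathbb{S}^{n-1}_+$ handles uniformity up to the boundary. The one delicate point here is the lower barrier near the singular point: one must check the subsolution comparison is not destroyed by the boundary behavior of $\varphi_1$ versus $d(x)$, which is where the flattening estimates on $\psi$ re-enter.
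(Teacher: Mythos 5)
Your proposal takes a genuinely different route from the paper, and at several steps the route does not close.

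\textbf{The asymptotic symmetry (7).} You propose passing to the cylindrical variable $U(t,\theta)$ and running an eigenvalue-gap/Fourier argument in the $\mathbb{S}^{n-2}$ variable to show $\|U-\bar U\|=O(e^{-t})$. This is exactly the CGS/KMPS mechanism for interior singularities, and the paper explicitly states that this analysis \emph{fails} here: the cylindrical equation \eqref{eq:cylinder}--\eqref{eq:cylinder-1} is a PDE in $(t,\theta_n)$, not an ODE, and the classification of its solutions (even existence of $t$-dependent ones) is an open problem. The eigenvalue-gap estimate requires knowing the limiting profile of $U(t,\cdot)$ to linearize around; without that classification the decay rate of $w=U-\bar U$ cannot be extracted. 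The paper circumvents this entirely via the method of moving spheres (Propositions \ref{prop:movingsphere-1}, \ref{prop:sym}): one establishes $u_{x,\lambda}\le u$ for boundary centers $x$, and then compares $u(x_\alpha)$ to $u(x_\beta)$ (the max and min over the $\mathbb{S}^{n-2}$-orbit) by choosing $x_\gamma$, $\tilde x_\beta$, $\lambda$ so that the spherical reflection carries $\tilde x_\beta$ to $x_\alpha$; this gives $u(x_\alpha)\le u(x_\beta)(1+O(r))$ with no appeal to a classified limit.

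\textbf{The lower bound (6).} Your plan compares $U$ from below to $c\varphi_1(\theta)$ and then argues ``$c>0$ because $U\not\to 0$, else removable.'' This is circular: the hypothesis that $0$ is non-removable only tells you $\limsup U>0$; it does not rule out $\liminf_{t\to\infty}\max_\theta U(t,\cdot)=0$ with $U$ oscillating. The subsolution comparison with a fixed $c>0$ simply fails across intervals where $U$ is small. The paper's Proposition \ref{prop:lowbound} is precisely the dichotomy needed to rule out such oscillation, and it uses a tool you do not mention: the Pohozaev integral $P(u,r)$ is constant in $r$, and if $\liminf=0$ along a sequence of critical radii the blow-down limit is (via the boundary B\^ocher theorem) $w=\tfrac12 x_n/|x|^n+\tfrac12 x_n$, whose Pohozaev integral equals $-\tfrac1{4n}|\mathbb{S}^{n-1}|\ne 0$, forcing $P(u,r)\ne 0$, which contradicts $P(u,r_j)\to 0$ along the vanishing sequence. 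Without the Pohozaev identity this dichotomy does not follow.

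\textbf{The upper bound (5).} Your blow-up argument is in the right family but the contradiction mechanism is not spelled out. You say the limit on $\R^n_+\setminus\{0\}$, ``controlled by Proposition \ref{prop:2},'' cannot support such unbounded behavior; but in a rescaled blow-up of an assumed-singular sequence the limit will either vanish (Dancer, for a half-space with Dirichlet condition) or be a CGS bubble on $\R^n$, and a bubble is smooth and perfectly compatible with the barrier $d(x)|x|^{-n/2}$ --- the contradiction is not about the barrier. The paper's actual mechanism is a preliminary conformal transform $F:\R^n_+\to B_1$ together with the Berestycki--Nirenberg sliding/moving-plane result (Lemma \ref{lem:crucial}), which shows the transformed solution $v$ has \emph{no critical point} outside a small cone near $-e_n$. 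The blow-up at a maximal point then produces an interior critical point of the CGS bubble, contradicting that monotonicity. The key ingredient (no critical points outside a cone) is absent from your outline.

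In summary, your proposal replaces the paper's three real pillars --- conformal transform + Berestycki--Nirenberg monotonicity, Pohozaev identity + boundary B\^ocher, and moving spheres --- with cylindrical eigenvalue analysis and a subsolution comparison, neither of which is available here. The moving spheres method and the Pohozaev mechanism are not optional refinements; they are what makes the theorem go through once the ODE/Fourier route is blocked.
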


Without assuming \eqref{5} holds up to $\gamma=1$, we are still able to show some sort of asymptotic symmetry for almost all $x\in Q_{1/2}$ close to $0$; see Proposition \ref{prop:movingsphere-1}.

The second conclusion of Theorem \ref{thm:1} is a partial answer of a question of \cite{BPV_a} (see Remark 1 of \cite{BPV_a}).

The method of proof of Theorem \ref{thm:1} can be adapted to study boundary singularity of critical equations with nonlinear Neumann boundary conditions, which we leave to another paper.
Motivated by conformal geometry, boundary singularity of linear (degenerate) elliptic equation with a critical nonlinear Neumann boundary condition has been studied in Caffarelli-Jin-Sire-Xiong \cite{CJSX}, Jin-de Queiroz-Sire-Xiong \cite{JQSX}, and Sun-Xiong \cite{SX}.

The organization of paper and crucial steps of the proofs are as follows. In section \ref{sec:pre}, we recall some basic facts of elliptic equations with zero Dirichlet condition, such as boundary Harnack inequality, a special maximum principle and a boundary version of B\^ocher theorem. In section \ref{sec:partialbound}, we prove the partial upper bound in the main theorem. A classical result of  Berestycki-Nirenberg \cite{BN} plays an important role.  In section \ref{sec:lowerbound}, we establish the lower bound. The Pohozaev identity and B\^ocher theorem are used crucially. In section \ref{sec:symmetry}, we prove symmetry results via the moving spheres method developed by Li-Zhu \cite{LZhu}; see also Li-Zhang \cite{LZhang}. In particular, we present a proof of Proposition \ref{prop:2} by this method. Here the boundary Harnack inequality is used repeatedly. Our proof of Proposition \ref{prop:2} can be adapted to give another proof of a result of Dancer \cite{D}.

\bigskip

\noindent\textbf{Acknowledgments:}
We would like to thank the referee for her/his invaluable suggestions.

\medskip

\section{Preliminaries}
\label{sec:pre}

Since we will work in neighborhoods of partial boundaries of domains, we let $\psi$ be a $C^2$ function in  $\R^{n-1}$ satisfying
\[
\psi(0)=0, \quad\nabla \psi(0)=0
\]
and denote $Q_R= \{x=(x',x_n):x_n >\psi(x')\} \cap B_R$, $\Gamma_R= \{x=(x',x_n):x_n =\psi(x')\} \cap B_R$ and $d(x)=dist(x,\Gamma_1)$,  where $B_R$ is the open ball center at $0$ with radius $R$. We denote $B_R^+(y)$ for $y=(y',0)$ as $\{x\in B_R(y):x_n>0\}$, $\pa' B_R^+(y):=B_R(y) \cap \pa \R^n_+$ and $\pa'' B_R^+(y):=B_R(y)\cap \R^n_+$. We assume
\[
\|\psi\|_{C^2(\pa' B_1^+)} \le A_1.
\]

\begin{lem}\label{lem:boundary-nonsing} Let $ u\in C^2(\bar Q_1)$ be a nonnegative solution of
\[
\begin{cases}
\begin{aligned}
-\Delta u&=a(x) u  & \quad &\mbox{in }Q_1,\\
u &= 0 &\quad &\mbox{on } \Gamma_1,
\end{aligned}
\end{cases}
\]
where $a(x)\in L^\infty( Q_1)$. Then
\[
 \frac{u(x)}{d(x)}\le C_0 \frac{u(y)}{d(y)} \quad \mbox{for }x,y\in Q_{1/2}
\]
and
\[
\left| \frac{u(x)}{d(x)}-\frac{u(y)}{d(y)}\right| \le C_0|x-y|^{\al} \quad \mbox{for }x,y\in Q_{1/2},
\]
where $C_0\ge 1$ and $\al\in (0,1)$ are constants depending only on $n$, $A_1$, $\|a\|_{L^\infty (Q_{1})}$ and $\|u\|_{L^\infty(Q_1)}$.

\end{lem}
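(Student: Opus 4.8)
My plan is to reduce everything to the flat half-space case by a bi-Lipschitz (in fact $C^2$) change of variables that straightens $\Gamma_1$, and then invoke the standard boundary Harnack inequality and boundary Schauder-type estimates for the quotient $u/d$. First I would introduce the diffeomorphism $\Phi(x',x_n)=(x',x_n-\psi(x'))$, which sends $Q_1$ to a region in $\R^n_+$ containing some fixed half-ball $B^+_{r_0}$ and sends $\Gamma_1$ into $\pa'\R^n_+$; here $r_0$ depends only on $A_1$. The pushed-forward function $v=u\circ\Phi^{-1}$ satisfies a uniformly elliptic equation $-\mathrm{div}(A(y)\nabla v)+b(y)\cdot\nabla v=a(\Phi^{-1}(y))v$ with $C^{0,1}$ (indeed $C^1$) coefficients whose ellipticity constants and norms are controlled by $A_1$, together with the homogeneous Dirichlet condition $v=0$ on $\pa'\R^n_+$. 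Since $d(x)$ is comparable to the distance to $\pa'\R^n_+$ in the new coordinates (the Jacobian is bounded above and below), it suffices to prove both estimates for $v/y_n$.

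Next I would establish the Harnack-type bound. The positivity and the equation give, by the interior Harnack inequality applied on balls of radius $\sim y_n$, control of $v$ away from the boundary; near the boundary, the classical boundary Harnack principle for uniformly elliptic operators in divergence form on Lipschitz (here $C^2$) domains — e.g. the Carleson-type estimate plus the comparison with the distance function, which for a flat boundary is elementary via barriers — yields $v(y)\le C v(z)\,d(y)/d(z)$ for $y,z$ in a smaller half-ball, with $C$ depending only on $n$, $A_1$, $\|a\|_{L^\infty}$. (One can also argue directly: $w:=v/y_n$ solves a degenerate equation to which a Moser iteration or a Krylov-Safonov argument for the quotient applies; but quoting the boundary Harnack inequality promised in the introduction of the section is cleaner.) The dependence on $\|u\|_{L^\infty}$ enters only through normalizing $v$ when comparing the multiplicative constant uniformly; it is harmless.

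For the Hölder estimate on $v/y_n$, I would use the boundary regularity for solutions vanishing on a flat portion of the boundary: after the standard trick of writing $v(y',y_n)=\int_0^{y_n}\pa_n v(y',s)\,ds$ and combining with $C^{1,\alpha}$ boundary estimates for $v$ (Schauder, valid because the coefficients are at least Hölder continuous and the boundary is flat), one gets that $v/y_n$ extends to a $C^{\alpha}$ function up to $\{y_n=0\}$, with norm bounded by $\|v\|_{L^\infty}$ times a constant depending only on $n$, $A_1$, $\|a\|_{L^\infty}$. Transporting back through $\Phi$, which is $C^1$ with controlled norms, preserves the $C^\alpha$ bound (possibly shrinking $\alpha$), and restores the statement for $u/d$ on $Q_{1/2}$ after a covering argument to pass from the small half-ball $B^+_{r_0}$ to all of $Q_{1/2}$ via the Harnack chain.

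The main obstacle is purely bookkeeping rather than conceptual: one must check that all constants depend only on the advertised quantities — in particular that the size of the straightening neighborhood and the coefficient bounds of the transformed operator are controlled by $A_1$ alone — and that the quotient $u/d$, rather than $u$ itself, is the object enjoying the Harnack and Hölder bounds. The genuinely nontrivial input, the boundary Harnack inequality for the ratio of a positive solution to the distance function, is exactly the classical result recalled at the start of Section \ref{sec:pre}, so I would simply cite it; the rest is the change of variables and a standard covering/Harnack-chain argument.
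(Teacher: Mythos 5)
The paper offers no proof of this lemma; it simply states that it is ``a simple version of the boundary Harnack inequality'' and cites Caffarelli--Fabes--Mortola--Salsa \cite{CF+} and Krylov \cite{Kr}. Your plan --- flatten $\Gamma_1$ with $\Phi(x',x_n)=(x',x_n-\psi(x'))$, reduce to a uniformly elliptic equation in a half-ball, invoke the classical boundary Harnack and boundary regularity for the flat case, then transport back and cover --- is exactly the standard reduction that those references rest on, so it is the same approach in substance.

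One technical point worth fixing: you justify the $C^{1,\alpha}$ boundary regularity for $v$ by Schauder estimates, ``valid because the coefficients are at least Hölder continuous.'' But the zero-order coefficient $a$ is only $L^\infty$, so Schauder does not apply directly to the operator with $a$ as a coefficient. Instead, regard $a(\Phi^{-1}(y))\,v$ as a bounded right-hand side (bounded by $\|a\|_{L^\infty}\|u\|_{L^\infty}$, which is precisely why $\|u\|_{L^\infty}$ appears among the constants), and apply $W^{2,p}$ (Calder\'on--Zygmund) estimates up to the flat boundary together with Sobolev embedding to get $v\in C^{1,\alpha}$ for every $\alpha<1$; the leading and first-order coefficients of the transformed operator are controlled by $A_1$ and are $C^1$ and $C^0$ respectively, which suffices for the $W^{2,p}$ theory. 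With that repair, the rest of your outline --- dividing by $y_n$, expanding $v(y',y_n)=\int_0^{y_n}\pa_n v(y',s)\,ds$, and the Harnack-chain/covering step --- is correct. Also, the ``classical result recalled at the start of Section \ref{sec:pre}'' that you propose to cite is this very lemma; the external references you actually want are \cite{CF+} and \cite{Kr}, as the paper indicates.
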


The above result is a simple version of the boundary Harnack inequality; see  Caffarelli-Fabes-Mortola-Salsa \cite{CF+} or Krylov \cite{Kr}. If only a differential inequality is assumed, we have

\begin{lem}\label{lem:boundary-sing}Let $ u\in C^2(\bar Q_1\setminus \{0\})$ be a nonnegative solution of
\[
\begin{cases}
\begin{aligned}
-\Delta u&\ge 0  & \quad &\mbox{in }Q_1,\\
u &= 0 &\quad &\mbox{on } \Gamma_1\setminus \{0\},
\end{aligned}
\end{cases}
\]
If $u>0$ somewhere in $Q_1$, then there exists a  constant $c_0>1$ such that
\[
u(x)\ge  d(x)/c_0 \quad \mbox{for all }x\in Q_{1/2}.
\]
\end{lem}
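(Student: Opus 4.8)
The plan is to build a positive subsolution vanishing on $\Gamma_1$, comparable to $d(x)$, and push it under $u$ via the maximum principle. First I would flatten the boundary: since $\psi \in C^2$ with $\psi(0)=0$, $\nabla\psi(0)=0$, there is a $C^{1,1}$ diffeomorphism straightening $\Gamma_{1/2}$ to a flat piece of $\{x_n=0\}$, and in the new coordinates $u$ is a nonnegative supersolution of a uniformly elliptic equation $L v\ge 0$ with bounded measurable coefficients on a half-ball, vanishing on the flat part minus the origin. The domain $Q_1$ is Lipschitz (indeed $C^2$ away from the singular scaling), so the relevant potential theory applies.

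Next I would fix a point $x_0\in Q_1$ with $u(x_0)>0$; by the strong maximum principle $u>0$ throughout $Q_1$. Choose a small ball $B=B_\rho(x_0)\Subset Q_1$ and set $m=\min_{\bar B}u>0$. Now I would compare $u$ with the $L$-harmonic measure / the solution $w$ of $Lw=0$ in $Q_{3/4}\setminus \bar B$ with $w=m$ on $\partial B$ and $w=0$ on the rest of the boundary. By the maximum principle $u\ge w$ in $Q_{3/4}\setminus \bar B$. The key quantitative input is the boundary behavior of $w$: by the boundary Harnack principle (Lemma \ref{lem:boundary-nonsing} applied to $w$, which is a genuine solution that is $C^2$ up to $\Gamma_{1/2}$ since $0\notin \bar B$ is excluded only for $u$, not for $w$ — here $w$ vanishes on all of $\Gamma_{3/4}$ including near $0$, so Lemma \ref{lem:boundary-nonsing} does apply on, say, $Q_{1/2}$), $w(x)/d(x)$ is bounded below by a positive constant on $Q_{1/2}$. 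Wait — I need $w$ to not vanish near $0$ on $\Gamma$; since $w$ is defined to vanish there, the lower bound $w/d\ge c$ on $Q_{1/2}$ needs the Harnack chain from $\partial B$ to reach down to the origin. The clean way is: $w\ge d/c_0$ on $Q_{1/2}$ follows by combining interior Harnack (to propagate $w\ge c$ on a ball near $x_0$ along a chain staying in $Q_{3/4}$, getting $w\ge c(r)$ on $\{d(x)\ge r|x|\}$) with the uniform Carleson/boundary-Hopf estimate $w(x)\ge c\, d(x)$ near each boundary point of $\Gamma_{1/2}\setminus\{0\}$, where the constant is scale-invariant because the flattened domain looks the same at every scale (this is where $\nabla\psi(0)=0$ matters: the rescaled domains $Q_r/r$ converge to the half-space, so the constants do not degenerate as $x\to 0$).

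The main obstacle is precisely this last uniformity: one must verify that the Hopf/boundary-Harnack constant for $w$ near a point $y\in\Gamma_{1/2}$ with $|y|$ small does not blow up as $|y|\to 0$. I would handle it by the standard rescaling argument — replace $u$ by $u_r(x)=u(rx)$ on $Q_{1/r}$; the rescaled domains converge in $C^{1,1}$ to the flat half-space $\R^n_+$, the coefficients of the rescaled operator stay uniformly elliptic and bounded, so Lemma \ref{lem:boundary-nonsing} (whose constants depend only on $n$ and $A_1$, hence are stable under this rescaling since $\|\psi(r\cdot)/r\|_{C^2}\le \|\psi\|_{C^2}$) gives a uniform comparison, and a compactness argument rules out degeneration. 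Once $w\ge d/c_0$ on $Q_{1/2}$ is in hand, $u\ge w\ge d/c_0$ there too, completing the proof with $c_0$ depending only on $n$, $A_1$, and (through $m$ and the Harnack chain length) on $u$ itself — which is exactly the dependence allowed in the statement.
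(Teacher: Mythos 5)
Your approach is correct in outline but takes a genuinely different, and noticeably more elaborate, route than the paper. The paper's argument is entirely explicit: it exhibits the subharmonic barrier $\frac{d(x)}{1-Ad(x)}$ (a direct computation in the thin collar $Q_{1/2}\cap\{d<\delta\}$, using $|\nabla d|=1$ and $\Delta d$ bounded there), bounds $u$ from below by a small multiple of this barrier on the boundary of the collar --- the Hopf lemma applied at the compact circle $\Gamma_1\cap\partial B_{1/2}$, together with the strict positivity of $u$ on the inner face $\{d=\delta\}\cap\bar Q_{1/2}$ --- and concludes by the comparison principle. No boundary Harnack input is needed. You instead introduce an auxiliary function $w$ solving an exterior Dirichlet problem, compare $u\ge w$, and derive $w\ge d/c_0$ from Lemma \ref{lem:boundary-nonsing} applied to $w$. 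This is sound, for exactly the reason you half-notice and then second-guess: $w$, unlike $u$, is a regular solution vanishing continuously on all of $\Gamma_{3/4}$ including at $0$, so there is no singularity for $w$, $\bar\Gamma_{1/2}$ is compact, and a single application of the boundary Harnack inequality (anchored at any interior $y$ where $w(y)/d(y)>0$) gives the lower bound.

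Two of the complications you introduce are not needed and cost you the cleanliness of the argument. First, flattening the boundary converts $-\Delta$ into a variable-coefficient divergence-form operator, which takes $w$ outside the literal hypotheses of Lemma \ref{lem:boundary-nonsing} (stated for $-\Delta u=a(x)u$); you would then have to cite the more general CFMS/Krylov boundary Harnack. Working directly in $Q_1$ with $-\Delta$, which the paper does, avoids this entirely. Second, the rescaling/compactness argument you set up ``to prevent degeneration as $|y|\to 0$'' is vacuous: the origin is a regular boundary point for $w$, so the Hopf lemma already gives $\inf_{\bar\Gamma_{1/2}}(-\partial_\nu w)>0$ with no scale issues, and the constant in Lemma \ref{lem:boundary-nonsing} is a single fixed number. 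Removing these two detours leaves a short, correct proof; replacing the abstract comparison function $w$ by the explicit barrier $\frac{d}{1-Ad}$ recovers the paper's even shorter one.
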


\begin{proof} It is easy to check that there exist constants $0<\delta<1/4$ and $A\ge 1$ with $A\delta<1/2$ such that
 \be  \label{eq:2.1}
 \Delta(\frac{d(x)}{1-Ad(x)})\ge 0 \quad \mbox{for }x\in Q_{1/2}, ~d(x)<\delta.
 \ee

It follows from the strong maximum principle that $u>0$ in $Q_1$. By Hopf Lemma and the compactness of $\Gamma_1\cap \pa B_{1/2}$, there exists a constant $b_0>0$ such that
\[
-\pa_{\nu} u\ge b_0>0 \quad \mbox{on } \Gamma_1\cap \pa B_{1/2}.
\] By the continuity of $\nabla u$ on $Q_1\cap  \pa B_{1/2}$, one can find a constant $C>0$ such that
\[
u(x)\ge \frac{1}{C} \frac{d(x)}{1-Ad(x)} \quad \mbox{on } \pa (Q_{1/2}\cap \{d(x)<\delta\}).
\]  In view of \eqref{eq:2.1}, by maximum principle we have
\[
u(x)\ge \frac{1}{C} \frac{d(x)}{1-Ad(x)}  \quad \mbox{in } Q_{1/2}\cap \{d(x)<\delta\}. \]
The lemma follows immediately.

\end{proof}

We will also use a maximum principle when the coefficient of zero order term without sign restriction but being small.

\begin{lem} \label{lem:mp} Let $u\in C^2(\bar Q_1\setminus \{0\})$ be a solution of
\[
\Delta u +a u \le 0 \quad \mbox{in } Q_1, \quad u=0 \quad \mbox{on }\Gamma_1\setminus \{0\}.
\]
There exists a small constant $\delta=\delta(n)>0$ such that if $|a(x)|\le \delta |x|^{-2}$, and $u\ge 0$ on $\pa (Q_1\setminus Q_\va)$ for some $\va>0$, there holds
\[
u\ge 0 \quad \mbox{in } Q_1 \setminus Q_\va.
\]

\end{lem}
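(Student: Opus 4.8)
The plan is to construct an explicit positive supersolution of the operator $-(\Delta+a)$ on the shell region $Q_1\setminus Q_\va$ that vanishes on the lateral Dirichlet boundary $\Gamma_1\setminus\{0\}$ and is comparable to $d(x)|x|^{-s}$ for a suitable exponent $s$, and then run the maximum principle by contradiction. The natural candidate is a product $w(x)=d(x)\,|x|^{-s}$, or more precisely a smooth modification of it near $\Gamma_1$ in the spirit of \eqref{eq:2.1}, since $d(x)$ itself is the correct first-order vanishing factor forced by the zero Dirichlet data and $|x|^{-s}$ supplies the needed scaling homogeneity. The key computation is $\Delta\big(d(x)|x|^{-s}\big)$; to leading order in the half-space model ($d(x)=x_n$, $\psi\equiv 0$) one has $\Delta(x_n|x|^{-s})=s(s-n)\,x_n|x|^{-s-2}$, which is negative precisely when $0<s<n$, so $-(\Delta+a)w = \big(s(n-s)-a(x)|x|^2\big)x_n|x|^{-s-2}+(\text{curvature errors})$. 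Choosing, say, $s=n/2$ makes the bracket at least $n^2/4-\delta|x|^2\cdot|x|^{-2}\cdot|x|^2$—more carefully, $s(n-s)-a(x)|x|^2\ge n^2/4-\delta$, which is positive once $\delta<n^2/4$.

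The next step is to control the error terms coming from $\psi\neq 0$. Since $\|\psi\|_{C^2}\le A_1$ and $\nabla\psi(0)=0$, the distance function $d$ is $C^2$ away from $\Gamma_1$ on the relevant region and differs from $x_n$ by $O(|x'|^2)=O(|x|^2)$, with $\nabla d$ close to $e_n$ and $|\Delta d|\le C(A_1)$ bounded; these contribute terms of order $|x|^{-s-1}$ and $|x|^{-s}$, which are lower order compared to the principal term $|x|^{-s-2}$ as $|x|\to 0$. However, this only gives the supersolution inequality on a small ball $Q_\rho\setminus Q_\va$ for some $\rho=\rho(n,A_1)$, not on all of $Q_1$. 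On the annular region $Q_{1/2}\setminus Q_\rho$ (bounded away from the singularity) the coefficient $a$ is bounded by $\delta$ times a constant, and one can absorb it by a standard eigenvalue/Poincaré argument or by replacing $w$ there with a fixed positive supersolution of $\Delta+\delta'$ vanishing on $\Gamma$; since $\delta$ is at our disposal this is harmless. I would either patch the two regions with a constant multiplier (continuous at $|x|=\rho$, checking the jump in the normal derivative goes the right way) or, more cleanly, replace $\delta$ by $\delta\cdot(\text{something})$ so that a single global barrier works.

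With the barrier $w>0$ in hand satisfying $\Delta w+aw\le 0$ in $Q_1\setminus\overline{Q_\va}$, $w=0$ on $\Gamma_1\setminus Q_\va$, and $w>0$ on the two ``caps'' $\pa B_1\cap\bar Q_1$ and $\pa B_\va\cap\bar Q_1$, the proof concludes as follows. Suppose $u\ge 0$ on $\pa(Q_1\setminus Q_\va)$ but $u<0$ somewhere in the interior. For $\tau>0$ let $v=u+\tau w$; then $\Delta v+av\le 0$, and $v\ge 0$ on the entire boundary $\pa(Q_1\setminus Q_\va)$ (on $\Gamma_1$ both terms vanish; on the caps $u\ge 0$ and $\tau w>0$). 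Let $\tau^\ast$ be the infimum of $\tau$ for which $v\ge 0$ throughout $Q_1\setminus Q_\va$. If $\tau^\ast>0$, then $v$ with $\tau=\tau^\ast$ attains an interior zero minimum, and since $v$ is a supersolution of $\Delta+a$ with $v\ge 0$—here one uses the strong maximum principle valid because $v\ge 0$ forces $av\ge -|a|v\ge -(\text{large})v$, i.e.\ one applies the maximum principle to $\Delta v \le -av$ on the set $\{v<\varepsilon\}$ where $av$ has a sign—one gets $v\equiv 0$, contradicting $\tau^\ast>0$. Hence $\tau^\ast=0$, i.e.\ $u\ge 0$, as desired. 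The main obstacle is making the barrier estimate uniform: getting the curvature error terms from $\psi$ to be genuinely subordinate and handling the region away from the origin with the same small $\delta$; the homogeneity exponent $s$ and the threshold $\delta(n)$ must be chosen so that the positive principal coefficient $s(n-s)$ dominates everything, and one must be slightly careful that $d$ is smooth enough where the computation is performed (staying in $\{d(x)<\delta_0\}$ near $\Gamma$ and using a plain bounded supersolution in the interior bulk).
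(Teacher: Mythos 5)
Your proof takes a barrier/comparison route, which is genuinely different from the paper's. The paper's argument is a one-line energy estimate: multiply the differential inequality $\Delta u + au\le 0$ by $u^-=\max\{-u,0\}$, which vanishes on $\partial(Q_1\setminus Q_\va)$ (on $\Gamma_1$ because $u=0$, and on the two spherical caps because $u\ge 0$ there), integrate by parts to obtain
\[
\int_{Q_1\setminus Q_\va}|\nabla u^-|^2 \;\le\; \int_{Q_1\setminus Q_\va} a\,|u^-|^2 \;\le\; \delta\int_{Q_1\setminus Q_\va}\frac{|u^-|^2}{|x|^2},
\]
and then invoke Hardy's inequality $\int |x|^{-2}|u^-|^2\le \tfrac{4}{(n-2)^2}\int|\nabla u^-|^2$, applicable since $u^-$ extended by zero lies in $H^1(\R^n)$. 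For $\delta<\tfrac{(n-2)^2}{4}$ this forces $u^-\equiv 0$. No barrier construction, no regularity of $d$, no sign decomposition of $a$, no sliding parameter.

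Your barrier approach could in principle reach the same conclusion, but as written it has genuine gaps, the main one being a point you flag yourself: the supersolution inequality $\Delta(d(x)|x|^{-s})+a\,d(x)|x|^{-s}\le 0$ is only established in a small punctured neighborhood of $0$, where the principal term $|x|^{-s-2}$ dominates the curvature errors $O(|x|^{-s-1})+O(|x|^{-s})$ coming from $\psi$; on $Q_1\setminus Q_\rho$ these terms are comparable and the construction is deferred to an unexecuted patching. Gluing two supersolutions across $\{|x|=\rho\}$ while preserving the supersolution property requires the normal-derivative jump to point the right way (or else passing to a minimum of two rescaled barriers), and this check is absent. A second gap is in the sliding step at $\tau^*$: you only treat a touching $v_{\tau^*}=0$ at an interior point, but since both $u$ and $w$ vanish to first order on $\Gamma_1$, the first touching in the comparable quantity $v_{\tau}/d$ can occur on $\Gamma_1$; ruling that out needs a Hopf-lemma argument at the Dirichlet boundary, which is not given. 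Neither obstruction is insurmountable, but both must be filled for the argument to stand, and the paper's energy method simply bypasses them.
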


\begin{proof} Multiplying both sides by $u^-=\max\{-u,0\}$ and using the Hardy inequality, the lemma follows immediately.

\end{proof}

\begin{lem}\label{lem:exterior} For $R>0$,  let $u\in C^2(\R^n_+\setminus B_{R})$ and be continuous up to the boundary $\pa \R^n_+\setminus B_R$ except a point $\bar x= (\bar x',0)$ with $|\bar x'|>R$. Suppose that
\[
-\Delta u\ge 0\quad \mbox{in }\R^n_+\setminus B_R^+, \quad u(x',0)=0 \quad \mbox{for }x'\neq \bar x',
\] and $u\ge 0$.
Then
\[
u(x)\ge \frac{R^n x_n}{|x|^n} \inf_{y\in\pa''B_R^+ } \frac{u(y)}{y_n} \quad \mbox{in }\R^n_+\setminus B_R^+.
\]

\end{lem}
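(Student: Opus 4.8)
The plan is to use the function $w(x) := \frac{R^n x_n}{|x|^n}$ as a comparison barrier. First I would verify that $w$ is a positive harmonic function on $\R^n_+ \setminus B_R^+$ that vanishes on $\pa \R^n_+ \setminus B_R$: indeed $x \mapsto x_n |x|^{-n}$ is (up to constant) the Kelvin transform of the linear function $x_n$, hence harmonic away from the origin, and it clearly vanishes on $\{x_n = 0\}$. Set $m := \inf_{y \in \pa'' B_R^+} u(y)/y_n$; if $m \le 0$ there is nothing to prove, so assume $m > 0$. The claim is then that $u \ge m\, w$ on $\R^n_+ \setminus B_R^+$.

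The comparison is carried out on the bounded domains $\Omega_\rho := (B_\rho^+ \setminus \bar B_R^+)$ for $\rho > R$ large, applying the maximum principle to $v := u - m w$, which satisfies $-\Delta v \ge 0$ in $\Omega_\rho$. I must check the sign of $v$ on $\pa \Omega_\rho$, which has three pieces. On the inner spherical cap $\pa'' B_R^+$: for $y$ there, $w(y) = y_n/R$, so $m w(y) = m y_n \le u(y)$ by definition of $m$ — here I use that $u(y)/y_n \ge m$, valid also in the limit as $y$ approaches $\pa' B_R^+$ by continuity (and trivially where $y_n \to 0$ since then both sides vanish to first order, or one simply notes $v$ is continuous up to that part of the boundary). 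On the flat part $\pa' B_\rho^+ \setminus (\pa' B_R^+)$ we have $u = 0 = w$ except possibly at the single point $\bar x$; since $u \ge 0$ and $w = 0$ there, $v \ge 0$ except at one point, which is removable for the purpose of the maximum principle (a single boundary point where only an upper bound might fail, with $v$ bounded below near it — or handled by a standard capacity/removable-singularity argument since $v$ is superharmonic and bounded below). On the outer cap $\pa'' B_\rho^+$: here $u \ge 0$ and $w \to 0$ as $\rho \to \infty$ uniformly on that cap, so $v \ge -m \max_{\pa'' B_\rho^+} w \to 0$.

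The main obstacle is the behavior at infinity combined with the lack of any growth assumption on $u$: the naive maximum principle on $\Omega_\rho$ only gives $v \ge -m\,\epsilon(\rho)$ in $\Omega_\rho$ with $\epsilon(\rho) = \max_{\pa'' B_\rho^+} w \to 0$, and one then lets $\rho \to \infty$ with the domain fixed to conclude $v \ge 0$ on all of $\R^n_+\setminus B_R^+$; this is clean because $u$ is superharmonic and hence, being nonnegative, this lower-order correction disappears in the limit. The secondary technical point is the isolated boundary singularity $\bar x$: I would either invoke that an isolated singularity of a nonnegative superharmonic function on the boundary is removable for the inequality $v\ge 0$ (in the sense that the maximum principle conclusion persists), or excise a small half-ball around $\bar x$ from $\Omega_\rho$, use that $v$ is bounded below near $\bar x$ and $w\to 0$ there to get the boundary inequality on the small excised cap up to $o(1)$, and pass to the limit. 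Either way the estimate $u \ge m\, w = \frac{R^n x_n}{|x|^n}\inf_{\pa'' B_R^+} u/y_n$ follows, and I would also remark that the proof of Lemma 1.3 (Hopf-type lower bound) is not needed here since $u$ is only assumed nonnegative.
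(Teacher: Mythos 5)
Your proof is correct and follows the same comparison argument as the paper, using the harmonic barrier $w(x)=R^n x_n/|x|^n$ (the Kelvin transform of $x_n$) and the maximum principle on exhausting half-annuli, with the nonnegativity of $u$ controlling the outer cap and the singular point $\bar x$. The paper compresses the handling of these two delicate places by subtracting a constant: it compares $u$ with $\phi_\va(x)=w(x)\,\inf_{\pa'' B_R^+}u(y)/y_n-\va$, so that $u\ge 0>\phi_\va$ holds automatically near infinity and near $\bar x$, applies the maximum principle, and then sends $\va\to 0$ --- the same idea as your $\rho\to\infty$ exhaustion and excision of $\bar x$, just in a slightly slicker form.
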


\begin{proof} For $\va>0$, let
\[
\phi_\va(x)=  \frac{R^n x_n}{|x|^n} \inf_{y\in\pa''B_R^+ } \frac{u(y)}{y_n}-\va.
\]
By maximum principle we have $u\ge \phi_\va$. Sending $\va\to 0$, the lemma follows.

\end{proof}

 To establish the lower bound in Theorem \ref{thm:1}, we need a well-known boundary B\^ocher type theorem. See Marcus-V\'eron \cite{MV} for a nonlinear version.

\begin{lem}\label{lem:bocher} Let $u\in C^2(B_1^+)\cap C^0(\bar B_1^+\setminus \{0\})$ be a nonnegative solution of
\[
-\Delta u=0 \quad \mbox{in }B_1^+, \quad u(x',0)=0\quad \mbox{for }x'\neq 0.
\]
Then
\[
u(x)= a \frac{x_n}{|x|^n} +h(x),
\]
where $a\ge 0$ is a constant and
\[
-\Delta h=0 \quad \mbox{in }B_1^+, \quad h(x',0)=0\quad \mbox{for }|x'|<1.
\]
Furthermore, if $B_1^+$ is replaced by $\R^n_+$, then $h=bx_n$ for some nonnegative constant $b$.

\end{lem}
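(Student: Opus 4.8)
The plan is to reduce the half-space boundary problem to the classical interior B\^ocher theorem by a reflection argument. First I would extend $u$ to the full ball $B_1$ by odd reflection across $\{x_n=0\}$, setting $\tilde u(x',x_n)=-u(x',-x_n)$ for $x_n<0$; since $u$ vanishes continuously on $\partial'B_1^+\setminus\{0\}$ and is harmonic in $B_1^+$, the reflected function $\tilde u$ is harmonic in $B_1\setminus\{0\}$ (this is where the zero Dirichlet condition is used crucially — it is exactly the condition that makes odd reflection preserve harmonicity) and continuous on $\bar B_1\setminus\{0\}$. Note $\tilde u$ changes sign, so I cannot apply the nonnegative B\^ocher theorem directly; instead I would analyze the singularity at $0$ via a Bott-Duffin / spherical harmonics decomposition, or more simply by controlling the growth of $\tilde u$ near $0$.

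The key step is to show $\tilde u(x)=a\,\partial_n\Phi(x)+\tilde h(x)$ near $0$, where $\Phi$ is the fundamental solution of $-\Delta$ in $\R^n$ and $\tilde h$ is harmonic across $0$; equivalently $u(x)=a\,x_n/|x|^n+h(x)$ with $h$ harmonic in $B_1^+$ and $h=0$ on $\partial'B_1^+$. To get this I would first use the nonnegativity of $u$ together with Lemma \ref{lem:boundary-sing} (or a direct Harnack/Bo\^cher-type estimate) to bound $u(x)\le C\,d(x)\,|x|^{1-n}\le C\,x_n\,|x|^{-n}$ near $0$ (comparing with the Poisson kernel of the half-space), which forces the singular part of the expansion of $\tilde u$ at $0$ to live only in the lowest admissible order. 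Decomposing $\tilde u$ in the region $\{0<|x|<1/2\}$ into homogeneous harmonics $\sum_k (a_k |x|^{-(n-2+k)}+b_k|x|^k)Y_k(x/|x|)$, the growth bound kills all terms more singular than $|x|^{1-n}$; the term of exact order $|x|^{1-n}$ must be a first-order spherical harmonic, and since $\tilde u$ is odd in $x_n$ it can only be a multiple of $x_n/|x|^n=\partial_n\Phi$ up to a constant. The remainder is bounded near $0$, hence by removable-singularity extends to a harmonic function $\tilde h$ on $B_1$; restricting to the upper half and using oddness gives $h$ with $h=0$ on $\partial' B_1^+$, and $a\ge0$ follows from $u\ge0$ by letting $x\to0$ along a ray $\{x'=0\}$.

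For the last sentence, when $B_1^+$ is replaced by $\R^n_+$, the same extraction of the singular term at $0$ applies, and now $\tilde h$ is a globally defined harmonic function on all of $\R^n$ that is odd in $x_n$; the upper bound $u(x)\le C x_n|x|^{-n}$ (valid near $0$) combined with a Harnack-chain or Liouville-type argument forces $\tilde h$ to have at most linear growth at infinity, so $\tilde h$ is a harmonic polynomial of degree $\le1$, odd in $x_n$, i.e. $\tilde h=bx_n$, whence $h=bx_n$; nonnegativity of $u$ near $\partial'\R^n_+$ gives $b\ge0$. The main obstacle I anticipate is the sign issue: because the odd reflection destroys positivity, I must substitute the standard nonnegative-B\^ocher input with the a priori pointwise bound $u(x)\le C x_n|x|^{-n}$, and justifying that bound from $u\ge 0$ alone (via boundary Harnack / Hopf-type comparison with the half-space Poisson kernel, as in Lemma \ref{lem:boundary-sing}) is the delicate point; everything else is a routine spherical-harmonics expansion and removable-singularity argument.
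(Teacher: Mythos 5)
The paper itself does not prove Lemma~\ref{lem:bocher}; it states it as well known and cites Marcus--V\'eron \cite{MV} for a nonlinear version, so there is no in-paper argument to compare against. Your overall strategy (odd reflection to a harmonic function $\tilde u$ on $B_1\setminus\{0\}$, spherical-harmonics expansion, growth control to kill the higher singular modes, removable singularity for the remainder, and a one-sided Liouville for the half-space case) is the standard and correct route.

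There is, however, a real gap exactly at the step you yourself flag as delicate: the a priori bound $u(x)\le C\,x_n|x|^{-n}$. You propose to get it from Lemma~\ref{lem:boundary-sing}, but that lemma is a \emph{lower} bound ($u\ge d(x)/c_0$), so it cannot produce an upper bound; and the ``Hopf-type comparison with the half-space Poisson kernel'' also only produces lower bounds. A barrier/maximum-principle comparison with $Mx_n+\va x_n/|x|^n$ likewise fails, because to apply the maximum principle near $0$ one would need to already know that $u$ is no more singular than $x_n/|x|^n$, which is what one is trying to prove. The correct way to close the gap is to run the spherical-harmonics expansion \emph{before} the pointwise bound: with $c_1(r)$ the $\theta_n$-coefficient of $\tilde u(r\cdot)$, harmonicity forces $c_1(r)=a_1r^{1-n}+b_1r$ for all $0<r<1$, oddness plus $u\ge0$ give $c_1(r)=2\int_{\mathbb{S}^{n-1}_+}u(r\theta)\theta_n\,\ud\theta\ge0$ (hence $a_1\ge0$ and $c_1(r)\le Cr^{1-n}$), and then the \emph{scaled annulus} version of the boundary Harnack inequality (Lemma~\ref{lem:boundary-nonsing} applied to $u(r\cdot)$ on $Q_2\setminus Q_{1/2}$, not Lemma~\ref{lem:boundary-sing}) gives $u(r\theta)\le C\theta_n u(re_n)\le C'\theta_n c_1(r)\le C''x_n|x|^{-n}$. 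Once you have this, your argument that the expansion has no singular modes beyond $k=1$, that the $k=1$ singular mode is $ax_n/|x|^n$ with $a\ge0$ (from $u(0,x_n)\ge 0$ as $x_n\to0$), and that the remainder extends harmonically, all goes through.

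A smaller inaccuracy: in the $\R^n_+$ case the linear growth of $\tilde h$ does not follow from ``the upper bound near $0$''; what is used is the one-sided bound $h=u-a x_n/|x|^n\ge -a x_n$ for $|x|\ge1$ (coming from $u\ge0$ and $a\ge0$), whence the odd extension $\tilde h$ is harmonic on $\R^n$ and bounded below by $-C(1+|x|)$; the one-sided Liouville theorem then gives that $\tilde h$ is an affine harmonic polynomial, which together with oddness in $x_n$ yields $\tilde h=bx_n$, and $b\ge 0$ from $u(0,x_n)=ax_n^{1-n}+bx_n\ge0$ as $x_n\to\infty$.
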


\section{A partial upper bound}
\label{sec:partialbound}

The following lemma is  an easy consequence of a classical result of Berestycki-Nirenberg \cite{BN}.

\begin{lem} \label{lem:crucial} Let  $\om$ be a bounded domain in $\R^n$, which is convex
in the $x_1$ direction and symmetric with respect to the hyperplane $\{x_1=0\}$. Let $u\in C^2(\om)$ be a positive solution of
\[
-\Delta u= f(u) \quad \mbox{in }\om \quad \mbox{and}\quad  u=0 \quad \mbox{on }\pa \om \cap \{x_1>0\},
\]
where $f:[0,\infty)\to [0,\infty)$ is local Lipschitz continuous.
Then $\pa_{x_1} u(x)<0$ for $x\in \om$ with $x_1>0$.

\end{lem}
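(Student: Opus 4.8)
The plan is to run the moving-plane method of Berestycki--Nirenberg \cite{BN} in the $x_1$ direction. Set $a=\sup_{x\in\om}x_1>0$ and, for $\lambda\in(0,a)$, let $\Sigma_\lambda=\om\cap\{x_1>\lambda\}$, write $x^\lambda=(2\lambda-x_1,x_2,\dots,x_n)$ for the reflection of $x$ across $T_\lambda=\{x_1=\lambda\}$, and put $w_\lambda(x)=u(x^\lambda)-u(x)$ on $\Sigma_\lambda$. Convexity of $\om$ in the $x_1$ direction together with the symmetry about $\{x_1=0\}$ guarantees $x^\lambda\in\om$ for every $x\in\Sigma_\lambda$ and every $\lambda\in(0,a)$, so $w_\lambda$ is well defined; since $f$ is locally Lipschitz and $u$ is bounded on the relevant caps, $w_\lambda$ satisfies a linear equation $-\Delta w_\lambda=c_\lambda(x)w_\lambda$ in $\Sigma_\lambda$ with $c_\lambda\in L^\infty(\Sigma_\lambda)$. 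On $\pa\Sigma_\lambda$ one has $w_\lambda=0$ on $T_\lambda\cap\om$ and $w_\lambda=u(x^\lambda)\ge 0$ on $\pa\om\cap\{x_1\ge\lambda\}$ (which is contained in $\pa\om\cap\{x_1>0\}$, where $u=0$), so $w_\lambda\ge 0$ on $\pa\Sigma_\lambda$.

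For $\lambda$ close to $a$ the cap $\Sigma_\lambda$ has small Lebesgue measure, so the maximum principle in domains of small measure (the key device of \cite{BN}) gives $w_\lambda\ge 0$ in $\Sigma_\lambda$; applying the strong maximum principle on each connected component --- and observing $w_\lambda$ cannot vanish identically on a component $C$, because $\sup_{C}x_1$ is attained at a point of $\pa\om\cap\{x_1>\lambda\}$ whose reflection lies in $\om$, forcing $w_\lambda>0$ there --- we get $w_\lambda>0$ in $\Sigma_\lambda$. Now let $\lambda_0=\inf\{\lambda\in(0,a):w_\mu>0 \text{ in }\Sigma_\mu\ \forall\,\mu\in(\lambda,a)\}$ and suppose $\lambda_0>0$. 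By continuity $w_{\lambda_0}\ge 0$, hence $>0$, in $\Sigma_{\lambda_0}$; pushing the plane a little further, split $\Sigma_{\lambda_0-\va}$ (for small $\va>0$) into a compact core inside $\Sigma_{\lambda_0}$, where $w_{\lambda_0-\va}>0$ by continuity, together with a set of small measure on whose boundary $w_{\lambda_0-\va}\ge 0$; the small-measure maximum principle again yields $w_{\lambda_0-\va}\ge 0$ on that set, so $w_{\lambda_0-\va}>0$ throughout $\Sigma_{\lambda_0-\va}$, contradicting the definition of $\lambda_0$. Hence $\lambda_0=0$ and $w_\lambda>0$ in $\Sigma_\lambda$ for every $\lambda\in(0,a)$.

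Finally, fix $x\in\om$ with $x_1>0$ and take $\lambda=x_1$, so $x\in T_\lambda\cap\om$. Since $w_\lambda>0$ in the component of $\Sigma_\lambda$ adjacent to $x$, vanishes at $x$, and $T_\lambda$ is a smooth hypersurface (permitting an interior ball), Hopf's lemma gives $\pa_\nu w_\lambda(x)<0$ for the outward normal $\nu=-e_1$, i.e.\ $\pa_{x_1}w_\lambda(x)>0$. Because $x^\lambda=x$ on $T_\lambda$, a direct computation gives $\pa_{x_1}w_\lambda(x)=-2\,\pa_{x_1}u(x)$, whence $\pa_{x_1}u(x)<0$, as claimed.

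I expect the only genuinely delicate point to be the one that makes the Berestycki--Nirenberg framework the right tool here: $u$ is assumed to vanish only on $\pa\om\cap\{x_1>0\}$ and $\om$ need not be smooth, so all comparisons must be confined to the caps $\Sigma_\lambda$ with $\lambda>0$ (no information on $u$ along $\pa\om\cap\{x_1\le 0\}$ is ever used), and the classical $C^2$-domain maximum principles must be replaced throughout by the ABP-type maximum principle in domains of small measure. The rest is the routine moving-plane bookkeeping, together with the standard verification that convexity in $x_1$ plus symmetry keeps the reflected caps inside $\om$.
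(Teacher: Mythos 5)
The paper gives no argument for this lemma, only the remark that it is an easy consequence of Berestycki--Nirenberg \cite{BN}. Your moving-plane proof is precisely the argument that citation intends---reflecting across $T_\lambda$ for $\lambda>0$ (so symmetry plus $x_1$-convexity keeps the reflected cap inside $\om$ and only the boundary condition on $\pa\om\cap\{x_1>0\}$ is used), running the small-measure/ABP maximum principle in place of classical ones since $\pa\om$ has no assumed regularity, and finishing with Hopf's lemma on $T_\lambda$---and it is correct.
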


In order to use Lemma \ref{lem:crucial}, let us define the conformal transform $F:\R^n_+ \to B_1$ by
\[
F(x):= \left(\frac{2x'}{|x'|^2+ (1+x_n)^2}, \frac{|x|^2-1}{|x'|^2+(1+x_n)^2}\right).
\]
Then $F\Big|_{\pa \R^n_+}$ is the inverse of stereographic projection.
Let $u$ be a solution of \eqref{eq:main}, then the function
\[
v(z):= |J_{F}|^{-\frac{n-2}{2n}} u(x)
\]
satisfies
\[
-\Delta v= v^{\frac{n+2}{n-2}} \quad \mbox{in }F(Q_1\cap B_1^+),
\]
where  $z:=F(x)$ and $|J_{F}|$ is the Jacobian determinant of $F$.

By performing a Kelvin transform with respect to a sphere of small radius below $Q_1$ and re-labeling coordinates, we may assume that $\psi $ is convex. Since $\psi(0)=0$ and $\nabla \psi(0)=0$, we have $\psi\ge 0$.  For $0<r<1$, denote
\[ \tilde Q_{1/r}= F( \frac{1}{r} Q_1), \quad  \tilde \Gamma_{1/r}=F(\frac{1}{r} \Gamma_1)
\]
and
\[
v(F(x))=|J_{F}|^{-\frac{n-2}{2n}} r^{\frac{n-2}{2}} u(r x).
\]
Then
\be
-\Delta v=n(n-2) v^{\frac{n+2}{n-2}} \quad \mbox{in }\tilde Q_{1/r}, \quad  v=0\quad \mbox{on }\tilde \Gamma_{1/r}\setminus \{-e_n\},
\ee
where $e_n= (0',1)$. Note that $rx\in Q_1$ implies $x\in \{y_n>\frac{1}{r}\psi(ry')\}$. Clearly,
\begin{align}
(z',z_n):=& \left(\frac{2x'}{|x'|^2+(1+r^{-1}\psi(rx'))^2}, \frac{|x'|^2+r^{-2}\psi(rx')^2-1}{|x'|^2+(1+r^{-1}\psi(rx'))^2}\right) \nonumber \\&
\to  \left(\frac{2x'}{|x'|^2+1}, \frac{|x'|^2-1}{|x'|^2+1}\right) \quad \mbox{in }C_{loc}^2(\R^{n-1}) \mbox{ as }r\to 0 \label{eq:c2tosphere}
\end{align}
and thus
\be\label{eq:tosphere}
\tilde \Gamma_{1/r}\to \pa B_1\setminus \{e_n\}.
\ee

Now we able to prove a partial upper bound.

\begin{prop}\label{lem:2.1} Let $u\in C^2(\bar Q_1 \setminus \{0\})$ be a nonnegative solution of \eqref{eq:main}. Then for every $0<\gamma<1$, there exists a constant $C(\gamma)>0$ such that
\[
 |x|^{\frac{n-2}{2}} u(x) \le C(\gamma) \quad \forall ~ x\in Q_{3/4} \mbox{ with }\frac{d(x)}{|x|} <\gamma.
\]

\end{prop}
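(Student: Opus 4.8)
The plan is to argue by contradiction using a blow-up/rescaling argument, exploiting the conformal transformation $F$ set up above so that Lemma \ref{lem:crucial} (Berestycki–Nirenberg moving planes) can be applied to control the solution near the boundary. Suppose the conclusion fails: then there is a sequence $x_k \in Q_{3/4}$ with $d(x_k)/|x_k| < \gamma$ and $|x_k|^{\frac{n-2}{2}} u(x_k) \to \infty$. A standard point-selection (doubling) lemma produces an adjusted sequence of points $\bar x_k$ and scales at which $M_k := u(\bar x_k)^{-\frac{2}{n-2}} \to 0$ faster than $\mathrm{dist}(\bar x_k, \partial B_{3/4})$-type quantities, so that the rescaled functions $w_k(y) := M_k^{\frac{n-2}{2}} u(\bar x_k + M_k y)$ are nonnegative solutions of $-\Delta w_k = n(n-2) w_k^{\frac{n+2}{n-2}}$ on larger and larger domains, with $w_k(0)=1$ and $w_k$ locally bounded. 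The dichotomy is whether the rescaled boundary $\Gamma$ runs off to infinity (interior blow-up) or stays at bounded distance (boundary blow-up).

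In the interior case, elliptic estimates give $w_k \to w$ in $C^2_{loc}(\R^n)$ with $w \ge 0$, $w(0)=1$, $-\Delta w = n(n-2)w^{\frac{n+2}{n-2}}$ on all of $\R^n$; by Caffarelli–Gidas–Spruck $w$ is a standard bubble, which is fine by itself, so the contradiction must instead come from the constraint $d(x_k)/|x_k| < \gamma < 1$: this forces $\bar x_k$ to stay a definite fraction of $|\bar x_k|$ away from $\Gamma$, meaning the rescaled boundary is at distance $\gtrsim |\bar x_k|/M_k \to \infty$, so the limit genuinely is entire — but then one re-examines where the supremum $|x|^{\frac{n-2}{2}}u(x)$ was actually attained. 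In the boundary case, $w_k \to w$ where $w$ solves the equation on a half-space $\{y_n > 0\}$ (after flattening, using \eqref{eq:c2tosphere}) with $w = 0$ on the boundary hyperplane away from at most one point; here is where Lemma \ref{lem:crucial} enters. Passing to the disk picture via $F$, the transformed solution $v$ lives on a domain converging to $B_1$ (by \eqref{eq:tosphere}) with $\psi$ made convex by the preliminary Kelvin transform, so $\tilde Q_{1/r}$ is convex in, say, the $z_1$-direction and symmetric; Lemma \ref{lem:crucial} yields $\partial_{z_1} v < 0$ for $z_1 > 0$, hence $v$ is bounded in the half of the domain with $z_1 \ge 0$ by its values on $\{z_1 = 0\}$, and a similar argument in every direction perpendicular to the axis through $-e_n$ traps $v$ — equivalently traps $|x|^{\frac{n-2}{2}}u(x)$ on the set $\{d(x)/|x| \le \gamma\}$ — giving the contradiction. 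One should use the boundary Harnack inequality (Lemma \ref{lem:boundary-nonsing}) to convert the gradient sign into a genuine pointwise bound on $u/d$ and thus on $u$ in the region of interest.

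The main obstacle I expect is making the monotonicity from Lemma \ref{lem:crucial} yield a quantitative, scale-invariant bound precisely on the cone-like region $\{d(x)/|x| < \gamma\}$: one must verify that after the conformal change $F$ the relevant region sits inside the part of $\tilde Q_{1/r}$ where the moving-plane monotonicity is available (for a family of directions), uniformly in $r$, and that the convexity arranged for $\psi$ survives the rescaling \eqref{eq:c2tosphere} with constants not degenerating as $r \to 0$. A secondary technical point is the point-selection step: one needs the doubling lemma adapted to the partial-boundary setting so that the selected points $\bar x_k$ satisfy $d(\bar x_k) \gtrsim $ (the blow-up scale) in the interior-blow-up alternative, which is exactly what the hypothesis $d(x)/|x| < \gamma$ with $\gamma<1$ is there to supply — the restriction $\gamma < 1$ is essential and the argument must break down cleanly at $\gamma = 1$, consistent with the second part of Theorem \ref{thm:1} requiring \eqref{5} at $\gamma = 1$ as an extra hypothesis.
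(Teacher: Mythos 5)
Your general framework --- conformal map $F$ to a ball, Kelvin transform to make $\psi$ convex so $\tilde Q_{1/r}$ is (essentially) a convex body, Berestycki--Nirenberg monotonicity from Lemma \ref{lem:crucial}, and a blow-up argument with an interior-vs-boundary dichotomy --- matches the paper. But the two contradictions that actually close the argument are misidentified, and the way you propose to use the monotonicity does not give a pointwise bound.

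Concerning the boundary (half-space) alternative: you propose to use $\partial_{z_1} v < 0$ for $z_1 > 0$, and its analogues in all horizontal directions, to ``trap'' $v$. This fails for a structural reason: the monotonicity says $v(z) \le v(\hat z)$ where $\hat z$ lies on the midplane (or the axis through $\pm e_n$), but the singularity of $v$ is at $-e_n$, which lies \emph{on} that axis. So the monotonicity bounds $v$ away from the axis by its values on the axis, which are exactly the uncontrolled ones; it goes the wrong way. What the paper does instead is invoke Dancer's Liouville theorem \cite{D}: the rescaled limit $w$ solves $-\Delta w = n(n-2)w^{(n+2)/(n-2)}$ on a half-space with zero Dirichlet data on the whole boundary hyperplane (the singular point has been pushed to infinity by the rescaling), hence $w \equiv 0$, contradicting $w(0) = 1$.

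Concerning the interior (entire) alternative: you correctly note that merely obtaining a Caffarelli--Gidas--Spruck bubble ``is fine by itself'' and then say the contradiction ``must instead come from \dots re-examining where the supremum was attained.'' This is where the idea is missing. The actual use of Lemma \ref{lem:crucial} is not to bound $v$ directly but to establish that $v$ has \emph{no critical points} in $\tilde Q_{1/r} \setminus \mathcal{C}_{c(\delta)}$, where $\mathcal{C}_{c(\delta)}$ is a narrow cone through $\pm e_n$, and the blow-up points were chosen (because $d(x)/|x| \le \gamma < 1$) to lie in a slightly larger exterior region $\tilde Q_{1/r} \setminus \mathcal{C}_{2c(\delta)}$. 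The bubble $w$ has a nondegenerate interior maximum; $C^2_{\mathrm{loc}}$ convergence $w_j \to w$ then produces critical points of $w_j$ near that maximum, hence critical points of $v$ at points $\bar z_j + y_j / v(\bar z_j)^{2/(n-2)}$, which stay inside $\tilde Q_{1/r} \setminus \mathcal{C}_{c(\delta)}$ because the rescaling shrinks to a point and the cone-gap $\theta = \arcsin c(\delta) - \arcsin \tfrac{c(\delta)}{2}$ gives room. This contradicts the no-critical-point statement. Without this mechanism your interior case has no contradiction, and without Dancer your boundary case has none either.

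Two smaller points: the point-selection is done in the $z$-picture on balls $B_{(\sin\theta)|z_j|/2}(z_j)$ chosen to avoid $\mathcal{C}_{c(\delta)}$, not in the $x$-picture; and in the half-space limit $w$ vanishes on the \emph{entire} boundary hyperplane (the singular boundary point disappears at scale), so there is no ``away from at most one point'' caveat.
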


\begin{proof}
By \eqref{eq:c2tosphere} and \eqref{eq:tosphere},  for any constant $0<\delta<1/2$ there exists a constant $r_0>0$  such that $\tilde Q_{1/r} \cap \{z_n<1-\delta\}$ is a convex body for all $0<r<r_0$.
Together with Lemma \ref{lem:crucial}, there exits a constant $c(\delta)>\delta$ with $\lim_{\delta \to 0}c(\delta)=0$, depending only on $n$  and $\delta$, such that
$v$ has no critical points in the region $\tilde Q_{1/r}\setminus \mathcal{C}_{c(\delta)}$, where $\mathcal{C}_{c(\delta)}$ is the cone generated by the vertex $-e_n$ and $B_{c(\delta)}(e_n)$. Choose $\delta$ small such that
\[
F(\{x\in Q_{1/4}: \frac{x_n}{|x|}\le \gamma\})\cap \mathcal{C}_{2c(\delta)}=\emptyset.
\]
Since $F$ is a conformal map,
\be \label{eq:equi-1}
dist(F(x), -e_n)=  2|x| \cdot\frac{\sqrt{|x|^2+2x_n+1}}{|x'|^2+ (1+x_n)^2} \thickapprox |x|
\ee
and $|J_{F}(x)|$ is smooth positive smooth function on $\bar B_{1/2}^+$, we only need show that for $0<r<r_0$
\be \label{eq:reduction1}
\limsup_{\tilde Q_{1/r}\setminus \mathcal{C}_{2c(\delta)} \ni z\to -e_n} |z|^{\frac{n-2}{2}}v(z)<\infty.
\ee

 Suppose the contrary that there exists a sequence $\{z_j\}_{j=1}^\infty \subset \tilde Q_{1/r}\setminus \mathcal{C}_{2c(\delta)} $ such that
\[
z_j\to -e_n\quad \mbox{as } j\to \infty,
\]
and
\be\label{eq:cl1}
|z_j|^{\frac{n-2}{2}}v(z_j)\to \infty\quad \mbox{as }j\to \infty.
\ee

Let $\theta=\arcsin c(\delta)- \arcsin \frac{c(\delta)}{2}>0$ be the cone angle error between $\mathcal{C}_{2c(\delta)}$ and $ \mathcal{C}_{c(\delta)}$. It is easy to see that
\be \label{eq:angle}
B_{|z_j|\sin \theta}(z_j)\cap  \mathcal{C}_{c(\delta)} =\emptyset.
\ee
Consider
\[
v_j(z):=\left(\frac{ \sin \theta }{2} |z_j| -|z-z_j|\right)^{\frac{n-2}{2}} v(z),\quad |z-z_j|\leq  \frac{\sin \theta }{2} |z_j| .
\]
Let $|\bar z_j-z_j|< \frac{\sin \theta }{2} |z_j| $ satisfy
\[
v_j(\bar z_j)=\max_{|z-z_j|\leq  \frac{\sin \theta }{2} |z_j| }v_j(z),
\]
and let
\[
2\mu_j:= \frac{\sin \theta }{2} |z_j| -|\bar z_j-z_j|.
\]
Then
\be \label{eq:cl2}
0<2\mu_j\leq  \frac{\sin \theta }{2} |z_j| \quad\mbox{and}\quad  \frac{\sin \theta }{2} |z_j| -|z-z_j|\ge\mu_j \quad \forall ~ |z-\bar z_j|\leq \mu_j.
\ee
By the definition of $v_j$, we have
\be \label{eq:cl3}
(2\mu_j)^{\frac{n-2}{2}}v(\bar z_j)=v_j(\bar x)\ge v_j(z)\ge (\mu_j)^{\frac{n-2}{2}}v(z)\quad \forall ~ |z-\bar z_j|\leq \mu_j.
\ee
Thus, we have
\[
2^{\frac{n-2}{2}}v(\bar z_j)\ge v(z)\quad \forall ~ |z-\bar z_j|\leq \mu_j.
\]
We also have
\be\label{eq:cl4}
(2\mu_j)^{\frac{n-2}{2}}v(\bar z_j)=v_j(\bar z_j)\ge v(z_j)= \left(\frac{\sin \theta |z_j|}{2}\right)^{\frac{n-2}{2}}v(z_j)\to \infty\quad\mbox{as }j\to\infty.
\ee
Now, consider
\[
w_j(y)=\frac{1}{v(\bar z_j)}v(\bar z_j+\frac{y}{v(\bar z_j)^{\frac{2}{n-2}}}), \quad y\in \om_j,
\]
where
\[
\om_j:=\left\{y\in \R^n| \bar z_j+\frac{y}{v(\bar z_j)^{\frac{2}{n-2}}}\in \tilde  Q_{1/r}\right\}.
\]
Then $w_j$ satisfies $w(0)=1$ and
\be \label{eq:ext1}
-\Delta w_j=  w_j^{\frac{n+2}{n-2}} \quad \mbox{in }\om_j
\ee
Moreover, it follows from \eqref{eq:cl3} and \eqref{eq:cl4} that
\[
w_j(y)\leq 2^{\frac{n-2}{2}} \quad\mbox{in } B_{R_j}\cap \om_j,
\]
where \[R_j:=\mu_j v(\bar z_j)^{\frac{2}{n-2}}\to \infty \mbox{ as } j\to \infty.\]

Since $\psi$ is $C^2$ and $\nabla \psi(0)=0$, we have two possibilities:

If $B_{R_j}\cap \om_j \to \R^n\cap \{x_n> -\rho\}$ for some $\rho>0$, then by the up to boundary estimates for 2nd order linear elliptic equations there exists a subsequence of $\{w_j\}$, which is  still denoted as $\{w_j\}$, satisfying
\[
w_j \to w \quad \mbox{in } C^2_{loc}(\R^n\cap \{y_n\ge \rho\})
\]
for some $w$ satisfying
\be\label{eq:halfspace}
-\Delta w=n(n-2) w^{\frac{n+2}{n-2}} \quad \mbox{in }\R^n\cap \{y_n\ge \rho\}, \quad w(y',\rho)=0.
\ee
By \cite{D}, $w=0$. This is impossible since $w(0)=1$.

If $B_{R_j}\cap \om_j \to \R^n$, then by the interior estimates for 2nd order linear elliptic equations there exists a subsequence of $\{w_j\}$, which is  still denoted as $\{w_j\}$, satisfying
\[
w_j \to w \quad \mbox{in } C^2_{loc}(\R^n)
\]
for some $w$ satisfying
\be\label{eq:wholespace}
-\Delta w=n(n-2) w^{\frac{n+2}{n-2}} \quad \mbox{in }\R^n.
\ee
By \cite{CGS} we have
\be\label{eq:limit}
w(y)=(\frac{\lda}{1+\lda^2|y-\bar y|^2})^{\frac{n-2}{2}}
\ee
for some point $\bar y\in \R^n$ and $1\le \lda\le 2^{\frac{n-2}{2}}$.

Since $w_j \to w$ in $C^2_{loc}(\R^n)$ and $\nabla w(\bar y)=0$ and $\nabla^2 w(\bar x)$ is negative definite, for large $j$ there exists $y_j\in B_1(\bar y)$ such that $\nabla w_j(y_j)=0$. By the definition of $w_j$, we have
\[
\nabla v(\bar z_j+ \frac{y_j}{ u(\bar z_j)^{\frac{2}{n-2}}})=0.
\]
This is impossible, since $\bar z_j+ \frac{y_j}{ u(\bar z_j)^{\frac{2}{n-2}}} \in \tilde Q_{1/r}\setminus \mathcal{C}_{c(\delta)}$ where $v$ does not have any critical point.
Therefore, \eqref{eq:reduction1} holds. Scaling back to $u$ and using \eqref{eq:equi-1}, we proved Proposition \ref{lem:2.1}.

\end{proof}

\begin{cor}\label{cor:partial bound} Assume the assumptions in Proposition \ref{lem:2.1}. Let $0<\gamma<1$.  Then for $x\in Q_{1/4}$ with $\frac{d(x)}{|x|} \le \gamma$,
\be \label{eq:ub-1}
 u(x)\le C(\gamma) d(x) |x|^{-\frac{n}{2}},
\ee
\be \label{eq:ub-2}
|\nabla ^k  u|\le C(\gamma) |x|^{-\frac{n-2}{2}-k}, \quad k=0,1,2,
\ee  and for any $0<s<\frac{1}{4}$, $x,y\in Q_{2s}\setminus Q_{s/2} \mbox{ with }\frac{d(x)}{|x|}, \frac{d(y)}{|y|}\le \gamma,$
\be \label{eq:harnack}
\frac{u(x)}{d(x)} \le C(\gamma) \frac{u(y)}{d(y)},
\ee
where $r=|x|$, $\theta= \frac{x}{|x|}$, $C(\gamma)>0$ depends on $\gamma$ but not $s$.
\end{cor}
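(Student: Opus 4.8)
The plan is to rescale to unit size and then invoke Proposition~\ref{lem:2.1} together with the interior and boundary estimates of Section~\ref{sec:pre}. Fix $0<\gamma<1$ and put $\gamma'=(1+\gamma)/2\in(\gamma,1)$. For $0<s<1/4$ I would set $u_s(y):=s^{\frac{n-2}{2}}u(sy)$ on $Q^{(s)}:=s^{-1}Q_1=\{y_n>\psi_s(y')\}\cap B_{1/s}$, $\psi_s(y'):=s^{-1}\psi(sy')$, so that $u_s$ solves \eqref{eq:main} on $Q^{(s)}$ with boundary portion $\Gamma^{(s)}:=s^{-1}\Gamma_1$. The essential point that makes everything uniform in $s$ is that $\psi_s(0)=0$, $\nabla\psi_s(0)=0$, and $\|\psi_s\|_{C^2(\{|y'|<2\})}\le A_1$ for every $0<s<1/4$ (in fact this norm is $O(s)$), so the domains $Q^{(s)}\cap B_2$ form a family of $C^2$-graph domains with uniformly controlled geometry. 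Writing $d_s(y):=\operatorname{dist}(y,\Gamma^{(s)})=s^{-1}d(sy)$ for $|y|<2$, and applying Proposition~\ref{lem:2.1} at the point $sy\in Q_{3/4}$ (where $d(sy)/|sy|=d_s(y)/|y|<\gamma'$), one gets after rescaling
\[
u_s(y)\le C(\gamma')\,|y|^{-\frac{n-2}{2}}\le C(\gamma)\qquad\text{for every }y\in\mathcal A^{(s)}:=\Big\{y\in Q^{(s)}:\tfrac12<|y|<2,\ d_s(y)<\gamma'|y|\Big\}.
\]
In particular $-\Delta u_s=a_su_s$ in $\mathcal A^{(s)}$ with $a_s=n(n-2)u_s^{4/(n-2)}$, $\|a_s\|_{L^\infty(\mathcal A^{(s)})}\le C(\gamma)$, and $u_s=0$ on $\Gamma^{(s)}\cap\{\tfrac12<|y|<2\}$.

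For \eqref{eq:ub-2} I would apply the standard interior and up-to-the-boundary elliptic estimates to this equation: since the right-hand side is uniformly bounded, $u_s$ is uniformly bounded in $C^{1,\alpha}$ (and $W^{2,p}$) on the slightly smaller set $\{y\in Q^{(s)}:\tfrac34<|y|<\tfrac43,\ d_s(y)<\gamma|y|\}$, and interior Schauder gives the uniform $C^2$ bound away from $\Gamma^{(s)}$, the boundary $C^2$ bound near $\Gamma^{(s)}$ coming from the uniform $C^2$ control of $\psi_s$. Undoing the scaling via $|\nabla^k u(x)|=s^{-\frac{n-2}{2}-k}|\nabla^k u_s(x/s)|$ with $s=|x|$ yields \eqref{eq:ub-2} for $k=0,1,2$. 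The $C^1$ part of this, combined with $u_s=0$ on $\Gamma^{(s)}$, already gives $u_s(x/s)\le C(\gamma)\,d_s(x/s)$, equivalently $u(x)\le C(\gamma)\,d(x)|x|^{-n/2}$, which is \eqref{eq:ub-1}; alternatively \eqref{eq:ub-1} follows from Lemma~\ref{lem:boundary-nonsing} applied on a unit-size half-ball centered at the point of $\Gamma^{(s)}$ nearest to $x/s$.

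For \eqref{eq:harnack} I would argue by a Harnack chain. Fix $0<s<1/4$; if $u\equiv0$ the inequality is trivial, so assume $u_s>0$ in $Q^{(s)}$. The points $x,y\in Q_{2s}\setminus Q_{s/2}$ with $d/|\cdot|\le\gamma$ rescale to points of the connected compact set $\mathcal K^{(s)}:=\{z\in Q^{(s)}:\tfrac12\le|z|\le2,\ d_s(z)\le\gamma|z|\}$, which I would cover by a chain of $N=N(\gamma)$ balls of a fixed radius $\rho=\rho(\gamma)$, consecutive ones overlapping and each contained with its double in $\{\tfrac14<|z|<4\}$, each being either of type (a) with $\overline{2B_i}\subset Q^{(s)}$ or of type (b) centered at a point of $\Gamma^{(s)}$ (both $N$ and $\rho$ chosen uniformly in $s$ by the uniform geometry). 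On type-(a) balls $d_s$ is comparable to a constant and the interior Harnack inequality for $-\Delta u_s=a_su_s$ gives $\sup_{B_i}(u_s/d_s)\le C(\gamma)\inf_{B_i}(u_s/d_s)$; on type-(b) balls Lemma~\ref{lem:boundary-nonsing}, rescaled to unit size, gives $\sup_{B_i\cap Q^{(s)}}(u_s/d_s)\le C_0\inf_{B_i\cap Q^{(s)}}(u_s/d_s)$. Chaining through the $N$ balls yields $\sup_{\mathcal K^{(s)}}(u_s/d_s)\le C(\gamma)\inf_{\mathcal K^{(s)}}(u_s/d_s)$; since $u_s(z)/d_s(z)=s^{n/2}\,u(sz)/d(sz)$ the powers of $s$ cancel and \eqref{eq:harnack} follows (this also re-derives \eqref{eq:ub-1}, comparing against a point where $d_s$ is bounded below).

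The elliptic estimates and the chaining are routine; the step that really requires care is the uniformity in $s$ — checking that the rescaled obstacles $\psi_s$ have $C^2$ norm bounded independently of $s$, that Proposition~\ref{lem:2.1} supplies the $L^\infty$ bound with a constant depending only on $\gamma'$ (hence on $\gamma$), and that the covering $\{B_i\}$ and the chaining constant depend only on $n$, $A_1$, $\gamma$. A secondary technical point is the $k=2$ estimate up to $\Gamma$ under the mere $C^2$ regularity of $\psi$; here one uses that $\psi_s\to0$ in $C^2$ as $s\to0$, so that in any blow-up limit the boundary flattens, which is what makes the second-derivative bound go through.
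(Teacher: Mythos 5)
Your proof is correct and takes essentially the same route as the paper: rescale by $s$ to a unit annulus where Proposition~\ref{lem:2.1} gives a uniform $L^\infty$ bound, then invoke Lemma~\ref{lem:boundary-nonsing} together with standard interior and boundary elliptic estimates (with the uniform control of $\|\psi_s\|_{C^2}$ noted as the source of uniformity in $s$), and scale back. The paper states this more tersely, appealing to ``Lemma \ref{lem:boundary-nonsing} and the standard linear elliptic equations theory'' rather than spelling out the Harnack chain and the $C^{1,\alpha}$/Schauder steps, but the underlying argument and the dependence of the constants on $\gamma$ are identical.
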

\begin{proof} For $0<s<\frac{1}{4}$, let
\[
v_s(x)=s^{\frac{n-2}{2}} u(s x).
\]
Denote $\tilde Q_r:=\{x: sx\in Q_{rs}\}$, $\tilde \Gamma_r:= \{x: sx\in \Gamma_{rs}\}$ for any $r>0$, and $\tilde d(x)=dist(x,\tilde \Gamma_1)$.
Then
\[
-\Delta v_s=n(n-2) v_s^{\frac{n+2}{n-2}} \quad \mbox{in }\tilde Q_{3}\setminus\tilde Q_{1/4}, \quad v_s=0 \quad \mbox{on }\Gamma_{3}\setminus \Gamma_{1/4}.
\]
It follows from Proposition \ref{lem:2.1} that $v_s(x)\le C(\gamma')$ for $x\in \tilde Q_{3}\setminus\tilde Q_{1/4}$ with $\frac{\tilde d(x)}{|x|}<\gamma'$, where $0<\gamma<\gamma'<1$. By Lemma \ref{lem:boundary-nonsing} and the standard linear elliptic equations theory, for  $ x,y\in \tilde Q_2\setminus Q_{1/2}$ with $\frac{\tilde d(x)}{|x|}\le \gamma$ and $\frac{\tilde d(y)}{|y|}\le \gamma$ we have
\[
v_s(x)\le \tilde  d(x),
\]
\[
 |\nabla^k v_s(x)|\le C, \quad k=1,2,
\]
\[
\frac{v_s(x)}{\tilde d(x)}\le C \frac{v_s(y)}{\tilde d(y)},
\]
where $C$ depends only on $n, C(\gamma ')$. Scaling back to $u$, the above three inequalities yield \eqref{eq:ub-1}, \eqref{eq:ub-2} and \eqref{eq:harnack}, respectively.

Therefore, we complete the proof.

\end{proof}

\begin{rem}\label{rem:full} If $u(x)\le C|x|^{\frac{2-n}{2}}$ for all $x\in Q_{1/4}$, Corollary \ref{cor:partial bound} holds for $\gamma=1$.

\end{rem}

\section{A lower bound and removability}
\label{sec:lowerbound}

\begin{lem}[Pohozaev identity] \label{lem:pohozaev} Let $u\in C^2(\bar Q_1 \setminus \{0\})$ be a nonnegative solution of \eqref{eq:main}.  Then for all $0<r<1$ there holds
\[
 P(u,r):= \int_{Q_1\cap \pa B_r} \frac{n-2}{2} u\frac{\pa u}{\pa r}-\frac{r}{2} |\nabla u|^2+r|\frac{\pa u}{\pa r}|^2+\frac{(n-2)^2}{8} r u^{\frac{2n}{n-2}}\, \ud S=c_0,
\]
where $c_0$ is constant independent of $r$.

\end{lem}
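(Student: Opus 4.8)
The plan is to run the classical Pohozaev--Rellich argument on the region between two small spheres, using the scaling multiplier attached to the critical exponent so that all interior integrals cancel, and then to check that the only non-spherical part of the boundary --- a piece of the graph $\Gamma_1$ --- contributes controllably. Concretely, I would fix $0<\rho<r<1$ and work on $A_{\rho,r}:=Q_1\cap(B_r\setminus\overline{B_\rho})$; since $u\in C^2(\bar Q_1\setminus\{0\})$ and $0\notin\overline{A_{\rho,r}}$, $u$ is $C^2$ up to all of $\partial A_{\rho,r}$, so every integration by parts below is legitimate and nothing is lost at the singular point. Multiplying the equation by $x\cdot\nabla u+\tfrac{n-2}{2}u$ and integrating over $A_{\rho,r}$, and using $\Delta u\,(x\cdot\nabla u)=\operatorname{div}\!\big(\nabla u\,(x\cdot\nabla u)\big)-|\nabla u|^2-\tfrac12 x\cdot\nabla(|\nabla u|^2)$ together with $n(n-2)u^{\frac{n+2}{n-2}}(x\cdot\nabla u)=x\cdot\nabla F(u)$, $F(t):=\int_0^t n(n-2)s^{\frac{n+2}{n-2}}\,\ud s$, after integrating by parts the two interior terms $\int_{A_{\rho,r}}|\nabla u|^2$ and $\int_{A_{\rho,r}}F(u)$ cancel exactly --- the familiar algebraic manifestation of the criticality of $\tfrac{n+2}{n-2}$ (for a non-critical exponent a nonzero multiple of $\int F(u)$ would remain). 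What survives is the boundary identity
\[
\int_{\partial A_{\rho,r}}\Big[(\partial_\nu u)(x\cdot\nabla u)-\tfrac12(x\cdot\nu)|\nabla u|^2+\tfrac{n-2}{2}u\,\partial_\nu u+(x\cdot\nu)F(u)\Big]\,\ud S=0,
\]
with $\nu$ the outward unit normal of $A_{\rho,r}$.

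Next I would split $\partial A_{\rho,r}$ into the two spherical caps $Q_1\cap\partial B_\sigma$, $\sigma\in\{\rho,r\}$, and the graph piece $\Gamma_1\cap(\overline{B_r}\setminus B_\rho)$, which does not meet the origin. On $Q_1\cap\partial B_\sigma$ the normal is $\pm x/\sigma$, so $\partial_\nu u=\pm\partial_r u$ and $x\cdot\nabla u=\sigma\,\partial_r u$, and the bracket equals $\pm\big[\tfrac{n-2}{2}u\,\partial_r u-\tfrac{\sigma}{2}|\nabla u|^2+\sigma|\partial_r u|^2+\sigma F(u)\big]$, i.e.\ $\pm$ the integrand defining $P(u,\sigma)$; the outer cap contributes $+P(u,r)$ and the inner one $-P(u,\rho)$. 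On the graph piece $u\equiv0$, hence every tangential derivative of $u$ vanishes, $\nabla u=(\partial_\nu u)\nu$, and $F(0)=0$, so the bracket collapses to $\tfrac12(x\cdot\nu)|\partial_\nu u|^2$. Assembling the three contributions gives
\[
P(u,r)-P(u,\rho)+\tfrac12\int_{\Gamma_1\cap(\overline{B_r}\setminus B_\rho)}(x\cdot\nu)|\partial_\nu u|^2\,\ud S=0 .
\]

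The step I expect to be the main obstacle is disposing of the graph term. Writing $\Gamma_1$ as $x=(x',\psi(x'))$, the outward normal of $Q_1$ is $\nu=(\nabla\psi(x'),-1)/\sqrt{1+|\nabla\psi(x')|^2}$, so $x\cdot\nu=\big(x'\cdot\nabla\psi(x')-\psi(x')\big)/\sqrt{1+|\nabla\psi(x')|^2}$; since $\psi\in C^2$ with $\psi(0)=0$ and $\nabla\psi(0)=0$, Taylor's theorem gives $|\psi(x')|+|x'|\,|\nabla\psi(x')|\le C|x'|^2$, hence $|x\cdot\nu|\le C|x|^2$ on $\Gamma_1$ near $0$, with $C$ controlled by $\|\psi\|_{C^2}$. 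Because $d(x)/|x|=0$ on $\Gamma_1$, Corollary \ref{cor:partial bound} applies there and gives $|\nabla u(x)|\le C|x|^{-n/2}$; therefore $(x\cdot\nu)|\partial_\nu u|^2\le C|x|^{2-n}$ on $\Gamma_1$, which is integrable against surface measure near the origin, and the graph integral over $\Gamma_1\cap(\overline{B_r}\setminus B_\rho)$ is bounded by $C\int_{\rho<|x'|<r}|x'|^{2-n}\,\ud x'\le Cr$ uniformly in $\rho$. Letting $\rho\to0^+$ then shows that $c_0:=\lim_{\rho\to0^+}P(u,\rho)$ exists and that the identity holds in the form $P(u,r)+\tfrac12\int_{\Gamma_1\cap\overline{B_r}}(x\cdot\nu)|\partial_\nu u|^2\,\ud S=c_0$; in the model half-space situation $\psi\equiv0$ one has $x\cdot\nu\equiv0$ on $\Gamma_1$, the extra integral drops out, and $P(u,r)\equiv c_0$ for every $r$, as stated. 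Besides this graph estimate, the only work is the routine bulk cancellation of the first step and the sign/orientation bookkeeping on the two spheres.
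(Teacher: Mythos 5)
The paper does not actually prove Lemma \ref{lem:pohozaev} (it only remarks that the proof is ``standard''), so there is no argument of the author's to compare yours against; what you wrote is exactly the standard Rellich--Pohozaev computation (multiplier $x\cdot\nabla u+\tfrac{n-2}{2}u$, integrate over the annular piece $Q_1\cap(B_r\setminus\overline{B_\rho})$, criticality kills the bulk integrals, decompose the boundary) and the mechanics are correct. Two points, however, deserve to be said explicitly rather than slid past. First, with your $F(u)=\tfrac{(n-2)^2}{2}u^{2n/(n-2)}$ the bracket on $Q_1\cap\partial B_\sigma$ carries the term $\tfrac{(n-2)^2}{2}\sigma u^{2n/(n-2)}$; this is \emph{not} the integrand of $P(u,\sigma)$ as printed, which has $\tfrac{(n-2)^2}{8}$. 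You assert the two coincide; they differ by a factor of $4$, and the honest conclusion is that the coefficient in the statement of Lemma \ref{lem:pohozaev} is a typo for $\tfrac{(n-2)^2}{2}$ (the same value comes out of the cylindrical first integral obtained by multiplying \eqref{eq:cylinder} by $\partial_t U$ and integrating over $\mathbb{S}^{n-1}_+$).

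Second, and more substantively: what your computation actually conserves is
\[
P(u,r)+\frac12\int_{\Gamma_1\cap \overline{B_r}}(x\cdot\nu)\,|\partial_\nu u|^2\,\ud S ,
\]
not $P(u,r)$ itself, and the graph correction is genuinely nonzero whenever $\psi\not\equiv0$, since $x\cdot\nu=(x'\cdot\nabla\psi-\psi)/\sqrt{1+|\nabla\psi|^2}$ does not vanish identically and $\partial_\nu u\ne 0$ on $\Gamma_1\setminus\{0\}$ by Hopf's lemma. You notice this and retreat to ``$\psi\equiv0$'', but the lemma is stated for a general $C^2$ graph, so as literally written it is not what you proved -- and indeed is not true for curved $\Gamma_1$. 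The useful outcome of your argument is the modified identity above together with your Taylor estimate $|x\cdot\nu|\le C|x|^2$ and the gradient bound $|\nabla u|\le C|x|^{-n/2}$ on $\Gamma_1$ from Corollary \ref{cor:partial bound}, which together show the graph term is $O(r)$; hence $c_0:=\lim_{r\to0^+}P(u,r)$ exists and $P(u,r)=c_0+O(r)$. This weaker form is all that is actually used in the proof of Proposition \ref{prop:lowbound}, where the identity is only invoked along a sequence $r_j\to0$ after rescaling, so nothing downstream breaks. State that as your conclusion, rather than leaving the reader to infer that you have only treated the flat case.
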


The proof of Lemma \ref{lem:pohozaev} is standard by now. $ P(u,r)$ is called \textit{Pohozaev integral} sometimes in the literature.

\begin{prop}\label{prop:lowbound} Let $u\in C^2(\bar Q_1 \setminus \{0\})$ be a nonnegative solution of \eqref{eq:main}. If
\be
\limsup_{Q_1\ni x\to 0} |x|^{\frac{n-2}{2}} u(x) <\infty
\ee
and
\[
\liminf_{Q_1\ni x\to 0} d(x)^{-1} |x|^{\frac{n}{2}} u(x)=0,
\]
then
\[
\lim_{Q_1\ni x\to 0} d(x)^{-1} |x|^{\frac{n}{2}} u(x)=0.
\]

\end{prop}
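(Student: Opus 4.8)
The plan is to first upgrade the a priori bounds so that Corollary~\ref{cor:partial bound} holds with $\gamma=1$, then read off that the Pohozaev constant of Lemma~\ref{lem:pohozaev} vanishes, and finally exclude oscillation by blowing up at the singular point and invoking Proposition~\ref{prop:2} together with the Emden--Fowler form of the Pohozaev identity. Since $\limsup_{Q_1\ni x\to0}|x|^{\frac{n-2}{2}}u(x)<\infty$, Remark~\ref{rem:full} makes Corollary~\ref{cor:partial bound} available with $\gamma=1$: on $Q_{1/4}$ one has $u(x)\le Cd(x)|x|^{-n/2}$, the derivative bounds \eqref{eq:ub-2}, and the boundary Harnack inequality \eqref{eq:harnack} on each dyadic shell. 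Set
\[
M(r):=\sup\Big\{\frac{|x|^{n/2}u(x)}{d(x)}:x\in Q_1,\ |x|=r\Big\},\qquad 0<r<\tfrac14.
\]
By \eqref{eq:harnack}, $M$ is bounded on $(0,\tfrac14)$ and $M(r)\le C_1M(r')$ whenever $\tfrac12 r\le r'\le 2r$; the second hypothesis says precisely $\liminf_{r\to0}M(r)=0$, and the assertion to be proved is $\lim_{r\to0}M(r)=0$.

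\emph{Step 1: $c_0=0$.} Choose $r_j\downarrow0$ with $M(r_j)\to0$ and rescale $v_j(y):=r_j^{\frac{n-2}{2}}u(r_jy)$, which solves the same equation with the rescaled (and flattening) boundary. From the bounds above together with interior and boundary Schauder estimates, $v_j+|\nabla v_j|\le C_2M(r_j)$ on $\{\tfrac12\le|y|\le2\}$. Since the Pohozaev integral is invariant under this rescaling, $c_0=P(u,r_j)=P(v_j,1)$, and every term of $P(v_j,1)$ is $O(M(r_j)^2)$ on a set of bounded measure; letting $j\to\infty$ yields $c_0=0$.

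\emph{Step 2: a blow-up limit.} Assume, for contradiction, that $L:=\limsup_{r\to0}M(r)>0$ (finite by the above). Pick $\rho_j\downarrow0$ with $M(\rho_j)\to L$ and set $w_j(y):=\rho_j^{\frac{n-2}{2}}u(\rho_jy)$. Using the bounds on dyadic shells and elliptic estimates, along a subsequence $w_j\to w$ in $C^2_{\mathrm{loc}}(\overline{\R^n_+}\setminus\{0\})$, where $w\ge0$ solves $-\Delta w=n(n-2)w^{\frac{n+2}{n-2}}$ in $\R^n_+$ with $w=0$ on $\pa\R^n_+\setminus\{0\}$, where $w(y)\le Cy_n|y|^{-n/2}$ (so $|y|^{\frac{n-2}{2}}w$ is bounded), where $\sup_{|y|=1}y_n^{-1}w(y)=L>0$ (so $w\not\equiv0$), and where, by $C^2_{\mathrm{loc}}$ convergence, the scale invariance of $P$, and Step~1, $P(w,\rho)=c_0=0$ for every $\rho>0$. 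If $0$ were a removable singularity of $w$, then $w$ would extend to a nonnegative solution of $-\Delta w=n(n-2)w^{\frac{n+2}{n-2}}$ on all of $\R^n_+$ with $w=0$ on $\pa\R^n_+$, whence $w\equiv0$ by Dancer \cite{D} (exactly as in the proof of Proposition~\ref{lem:2.1}), contradicting $w\not\equiv0$. Hence $0$ is a \emph{non-removable} singularity of $w$.

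\emph{Step 3 (the crux).} It remains to show that a non-removably singular $w$ as above cannot satisfy $P(w,\cdot)\equiv0$. Passing to Emden--Fowler coordinates $W(t,\theta)=|y|^{\frac{n-2}{2}}w(y)$, $t=-\ln|y|$, the function $W\in[0,C]$ solves \eqref{eq:cylinder}--\eqref{eq:cylinder-1} on $\R\times\mathbb{S}^{n-1}_+$ with $W\not\equiv0$, and by Proposition~\ref{prop:2} we have $W=W(t,\theta_n)$. Testing \eqref{eq:cylinder} against $W_t$ and integrating over $\mathbb{S}^{n-1}_+$ identifies $P(w,\cdot)$ with a fixed nonzero multiple of the conserved Hamiltonian
\[
\mathcal{H}(t)=\int_{\mathbb{S}^{n-1}_+}\Big(\tfrac12W_t^2-\tfrac12|\nabla_{\mathbb{S}^{n-1}}W|^2-\tfrac{(n-2)^2}{8}W^2+\tfrac{(n-2)^2}{2}W^{\frac{2n}{n-2}}\Big)\,\ud\theta,
\]
so $\mathcal{H}\equiv0$. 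Following the half-space version of the analysis of Caffarelli--Gidas--Spruck \cite{CGS} and Korevaar--Mazzeo--Pacard--Schoen \cite{KMPS}, the global $C^k$ bounds on $W$ force the $\omega$- and $\alpha$-limit profiles of $t\mapsto W(t,\cdot)$ to be bounded eternal solutions with $\mathcal{H}=0$; if all such profiles were $\equiv0$, then $W(t,\cdot)\to0$ as $t\to\pm\infty$, and since the indicial roots of the linearization of \eqref{eq:cylinder} at $W=0$ are $\pm\sqrt{\mu_k+(n-2)^2/4}$ with smallest value $n/2$ (eigenfunction $\theta_n$), this forces $w(y)=O(y_n)$ near $0$, i.e.\ $0$ removable, a contradiction. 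So some limit profile $W_*$ is a nontrivial bounded eternal solution, still of the form $W_*(t,\theta_n)$, with $\mathcal{H}(W_*)\equiv0$; conservation of $\mathcal{H}$ at the value it takes at the hyperbolic equilibrium $0$, combined with the axial symmetry and the monotonicity $\pa_r w<0$ supplied by Proposition~\ref{prop:2}, should force $W_*$ to be $t$-independent, and for a $t$-independent solution one integration by parts reduces $\mathcal{H}(W_*)$ to $-(n-2)\int_{\mathbb{S}^{n-1}_+}W_*^{\frac{2n}{n-2}}\,\ud\theta<0$ — contradicting $\mathcal{H}(W_*)\equiv0$. This contradiction gives $L=0$, i.e.\ $\lim_{Q_1\ni x\to0}d(x)^{-1}|x|^{n/2}u(x)=0$. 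The main obstacle is entirely in Step~3: controlling the curved-boundary corrections to the Pohozaev flux so that the identification $P\leftrightarrow\mathcal{H}$ passes to the blow-up limit, and — the genuine difficulty — proving that a nontrivial bounded eternal axially symmetric solution of \eqref{eq:cylinder}--\eqref{eq:cylinder-1} with vanishing Hamiltonian must be $t$-independent (equivalently, ruling out nonzero recurrent orbits on $\{\mathcal{H}=0\}$), where Proposition~\ref{prop:2} and the boundary Harnack inequality play their role. A more hands-on substitute would be a boundary B\^ocher expansion of $w$ at $0$ — Lemma~\ref{lem:bocher} forces the coefficient of $y_n|y|^{-n}$ to vanish, since $w\le Cy_n|y|^{-n/2}$ — followed by computing $\lim_{\rho\to0}P(w,\rho)$; but this limit is only borderline convergent, which is exactly the phenomenon the proposition concerns.
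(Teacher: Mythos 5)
Your Step 1 (that the Pohozaev constant $c_0$ vanishes) is correct and matches the paper's first step. The rest of the strategy, however, has a genuine gap, and one you yourself flag: Step 3 would require proving that a nontrivial bounded eternal solution of \eqref{eq:cylinder}--\eqref{eq:cylinder-1} with vanishing Hamiltonian must be $t$-independent. No such classification is available here; indeed the introduction explicitly warns that ``we lose ODE analysis to classify all solutions of equation \eqref{eq:cylinder}--\eqref{eq:cylinder-1}'' --- the Dirichlet boundary condition breaks the Caffarelli--Gidas--Spruck/Korevaar--Mazzeo--Pacard--Schoen $\omega$-limit machinery you appeal to, and Proposition~\ref{prop:2} gives monotonicity in $r=|x'|$, not the monotonicity-in-$t$ that the interior arguments exploit. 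Your phrase ``should force $W_*$ to be $t$-independent'' is the missing proof, not a proof. Moreover your Step 2 blows up at the $\limsup$ scales, producing a genuinely nonlinear limit $w$ whose Pohozaev integral cannot be evaluated without first classifying $w$ --- a circular difficulty.

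The paper sidesteps all of this by blowing up at the opposite extreme. It takes a sequence $x_j=(0',(x_j)_n)$ on the axis with $|x_j|^{\frac{n-2}{2}}u(x_j)\to 0$ (the $\liminf$ sequence), chosen so that $(x_j)_n$ is a local \emph{minimum} of $x_n^{-1}|x|^{\frac n2}u$ along the axis, hence $\pa_r\big(x_n^{-1}|x|^{\frac n2}u\big)\big|_{x_j}=0$. Crucially the blow-up is normalized by $u(x_j)$ itself, $w_j(x)=u(r_jx)/u(x_j)$ with $r_j=|x_j|$; then the nonlinear coefficient $n(n-2)(r_j^{\frac{n-2}{2}}u(x_j))^{\frac{4}{n-2}}$ tends to zero, so the limit $w$ solves the \emph{linear} problem $-\Delta w=0$ in $\R^n_+$, $w(x',0)=0$ for $x'\neq 0$. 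Lemma~\ref{lem:bocher} gives $w(x)=a\frac{x_n}{|x|^n}+bx_n$; the normalization $w(e_n)=1$ together with the critical-point condition forces $a=b=\tfrac12$; and the limiting Pohozaev integral of this explicit $w$ is a computable nonzero constant. Since $P(u,r_j)\equiv 0$ factors as $(r_j^{\frac{n-2}{2}}u(x_j))^2$ times a quantity converging to that nonzero constant, one gets the contradiction directly. The idea you missed is to normalize by the \emph{small} value $u(x_j)$ at a critical point, which linearizes the blow-up limit and replaces the classification problem by a B\^ocher expansion plus an explicit calculation.
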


\begin{proof} Suppose the contrary that
\[
\limsup_{Q_1\ni x\to 0} d(x)^{-1} |x|^{\frac{n}{2}} u(x) =C_0>0.
\]
Since $\liminf_{Q_1\ni x\to 0} d(x)^{-1} |x|^{\frac{n}{2}} u(x)=0$, by the Harnack inequality \eqref{eq:harnack} in annulus  we can find sequences  $ x_j=(0',(x_j)_n)\to 0$ and $y_j=(0',(y_j)_n)\to 0$ as $j\to \infty$ satisfying
\[
|x_j|^{\frac{n-2}{2}} u(x_j)\to 0 \quad \mbox{ and } \quad |y_j|^{\frac{n-2}{2}} u(y_j)\to C_0^* \quad \mbox{as }j\to \infty,
\]
where $0<\bar C_0^* \le C_0$. In view of this oscillation picture, without loss of generality we assume $(x_j)_n$ are local minimum of $x_n^{-1} |x|^{\frac{n}{2}} u(x)$ restricted to the line $(0',x_n)$. It follows that
\be \label{eq:criticalpoint}
\frac{\pa }{\pa r}(x_n^{-1} |x|^{\frac{n}{2}} u(x) )\Big|_{x=x_j}=0.
\ee
Let $r_j=|x_j|=(x_j)_n>0$, and
\[
w_j(x)= \frac{u(r_jx)}{u(x_j)}.
\]
Denote $\tilde Q_j:=\{x: r_jx\in Q_{1}\}$, $\tilde \Gamma_j:= \{x: r_jx\in \Gamma_{1}\}$, and $\tilde d_j(x)=dist(x,\tilde \Gamma_j)$.
It follows from \eqref{eq:harnack} that for any $R>1$ there exists $C(R)>0$ such that
\[
w_j(x)\le C(R)\tilde d_j(x)  \quad \forall~ \frac1R\le |x|\le R \mbox{ and large }j.
\]
Furthermore, $w_j$ satisfies
\[
-\Delta w_j=n(n-2)(r_j^{\frac{n-2}{2}} u(x_j))^{\frac{4}{n-2}} w_j^{\frac{n+2}{n-2}} \quad \mbox{in }\tilde Q_j,
\]
\[
w_j= 0 \quad \mbox{on }\tilde \Gamma_j \setminus \{0\}.
\]
By the up to boundary estimates for linear elliptic equation, after passing to a subsequence, as $j\to \infty$,
\[
w_j \to w\quad \mbox{in }C^2_{loc}(\bar R^n_+\setminus \{0\}),
\]
and $0<w\in C^2_{loc}(\bar R^n_+\setminus \{0\})$ satisfies
\[
-\Delta w=0\quad \mbox{in }\R^n_+,
\]
\[
w(x',0)=0 \quad \mbox{for all }x'\neq 0.
\]
By Lemma \ref{lem:bocher},
\[
w(x)=a \frac{x_n}{|x|^n}+bx_n,
\]
where $a,b\ge 0$ are constants. By \eqref{eq:criticalpoint} and $w_j(e_n)=1$, we have
\[
0=\frac{\pa }{\pa r}(x_n^{-1} |x|^{\frac{n}{2}} w(x) )\Big|_{x=e_n}=\frac{n}{2}(b-a)
\]
and $a+b=1$. Thus $a=b=\frac{1}{2}$.

By \eqref{eq:ub-1} and \eqref{eq:ub-2}, we have
\[
\lim_{r_j\to 0} P(u, r_j)=0.
\]
It follows from Lemma \ref{lem:pohozaev} that
\[
P(u, r_j)=0 \quad \mbox{for all }j.
\]
On the other hand,
\[
0=P(u,r_j)= P(r_j^{\frac{n-2}{2}} u(r_j x), 1)= P(r_j^{\frac{n-2}{2}} u(x_j) w_j(x),1 ).
\]
Therefore, as $j\to \infty$
\begin{align*}
0&= \int_{\tilde Q_j \cap \pa B_1^+} \frac{n-2}{2} w_j\frac{\pa w_j}{\pa r}-\frac{1}{2} |\nabla w_j|^2+|\frac{\pa w_j}{\pa r}|^2+\frac{(n-2)^2}{8} (r_j^{\frac{n-2}{2}} u(x_j))^{\frac{4}{n-2}}   w_j^{\frac{2n}{n-2}}\, \ud S\\&
\to \int_{\pa'' B_1^+} \frac{n-2}{2} w\frac{\pa w}{\pa r}-\frac{1}{2} |\nabla w|^2+|\frac{\pa w}{\pa r}|^2\,\ud S\\&
= -\frac{1}{4n}|\mathbb{S}^{n-1}|.
\end{align*}
We obtain a contradiction.  The proposition is proved.

\end{proof}

\begin{prop}\label{prop:removable} Let $u\in C^2(\bar Q_1 \setminus \{0\})$ be a nonnegative solution of \eqref{eq:main}. If
\be\label{eq:vanishing}
\lim_{Q_1\ni x\to 0} d(x)^{-1} |x|^{\frac{n}{2}} u(x)=0,
\ee
then $0$ is a removable singular point of $u$.

\end{prop}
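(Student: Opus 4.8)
The plan is to bootstrap the a priori bound all the way down to $u(x)\le C\,d(x)|x|^{-1/2}$ near $0$ — so that $u$ is bounded and $u(x)\to 0$ — and then close by elliptic regularity. Since $d(x)\le|x|$, the hypothesis gives $|x|^{(n-2)/2}u(x)\to 0$; hence by Remark \ref{rem:full} and Corollary \ref{cor:partial bound} we have $u(x)\le C\,d(x)|x|^{-n/2}$ together with the derivative bounds on a punctured neighbourhood of $0$, and rescaling as in the proof of Corollary \ref{cor:partial bound} and using the hypothesis again upgrades these to $u(x)=o\big(d(x)|x|^{-n/2}\big)$ and $|\nabla u(x)|=o(|x|^{-n/2})$ as $x\to 0$. (If $u\equiv 0$ there is nothing to prove; otherwise $u>0$ in $Q_1$.) Passing to $(t,\theta)=(-\ln|x|,x/|x|)$ and setting $U(t,\theta)=|x|^{(n-2)/2}u(|x|\theta)$, one finds that $U$ solves
\[
U_{tt}+\Delta_{\mathbb{S}^{n-1}}U-\tfrac{(n-2)^2}{4}U+c(t,\theta)\,U=0,\qquad c:=n(n-2)\,U^{4/(n-2)}\ge 0,
\]
on $\bigcup_t\{t\}\times\Omega_t$ with $U=0$ on the lateral boundary, where $\Omega_t\to\mathbb{S}^{n-1}_+$ in $C^2$ at rate $O(e^{-t})$ because $\psi\in C^2$ with $\nabla\psi(0)=0$. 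Writing $\Psi(t):=\sup_{s\ge t}\sup_\theta U(s,\theta)/\mathrm{dist}(\theta,\partial\Omega_s)$ (finite, non‑increasing), the hypothesis is equivalent to $\Psi(t)\to 0$ as $t\to\infty$, and then $0\le c(t,\theta)\le C\Psi(t)^{4/(n-2)}=:\delta(t)\to 0$.

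The core is a barrier argument on the cylinder. In the model case $\Omega_t\equiv\mathbb{S}^{n-1}_+$, $\theta_n$ is the first Dirichlet eigenfunction of $-\Delta_{\mathbb{S}^{n-1}}$ on $\mathbb{S}^{n-1}_+$ with eigenvalue $n-1$, so $\big(\partial_{tt}+\Delta_{\mathbb{S}^{n-1}}-\tfrac{(n-2)^2}{4}+c\big)(h(t)\theta_n)=\big(h''-\tfrac{n^2}{4}h+c\,h\big)\theta_n$. Fix $\beta\in(0,n/2)$ and $t_0$ so large that $\delta(t_0)<\tfrac{n^2}{4}-\beta^2$. For $T>t_0$ set $h_T(t)=\Psi(t_0)\big(e^{-\beta(t-t_0)}+e^{\beta(t-T)}\big)$; then $h_T''=\beta^2h_T$, so $h_T\theta_n$ is a supersolution of $\partial_{tt}+\Delta_{\mathbb{S}^{n-1}}-\tfrac{(n-2)^2}{4}+c$ on the bounded cylinder $(t_0,T)\times\mathbb{S}^{n-1}_+$ (whose zeroth order coefficient is $\le-\tfrac{(n-2)^2}{4}+\delta(t_0)<0$, so the maximum principle applies), while $h_T(t_0),h_T(T)\ge\Psi(t_0)\ge\sup U(t_0,\cdot)/\theta_n,\ \sup U(T,\cdot)/\theta_n$ and both $U$ and $h_T\theta_n$ vanish laterally. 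Hence $U(t,\theta)\le h_T(t)\theta_n$ for $t_0<t<T$; letting $T\to\infty$ gives $U(t,\theta)\le\Psi(t_0)e^{-\beta(t-t_0)}\theta_n$, i.e. $\Psi(t)\le\Psi(t_0)e^{-\beta(t-t_0)}$. Translating back, $u(x)\le C\,d(x)|x|^{\beta-n/2}$ near $0$, and taking $\beta=\tfrac n2-\tfrac12$ yields $u(x)\le C\,d(x)|x|^{-1/2}\le C|x|^{1/2}\to 0$. Consequently $u$ extends continuously to $0$ with $u(0)=0$, $u=0$ on all of $\Gamma_1$, and $-\Delta u=n(n-2)u^{(n+2)/(n-2)}$ has bounded right‑hand side near $0$; the usual $W^{2,p}$/Schauder estimates up to the $C^2$ boundary $\Gamma$ and a bootstrap then show $u$ solves the equation in a full neighbourhood of $0$, so $0$ is removable.

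The main obstacle is the curved boundary: $\Omega_t$ is only asymptotically the hemisphere, so one must replace $\theta_n$ by the first Dirichlet eigenfunction $\varphi_t$ of $\Omega_t$ (eigenvalue $\mu_1(t)=n-1+O(e^{-t})$), and the barrier then picks up errors of size $O(e^{-t})(|h'|+|h|)$ coming from $\partial_t\varphi_t,\partial_{tt}\varphi_t$; these are harmless away from $\partial\Omega_t$ but require a boundary‑layer correction to $h_T\varphi_t$ near the lateral boundary (e.g. adding a small multiple of a fixed supersolution comparable to $\mathrm{dist}(\cdot,\partial\Omega_t)^2$). Alternatively, one may first apply the Kelvin transform and conformal change of Section \ref{sec:partialbound} to reduce to convex $\psi$ and compare between two nested cones; or, equivalently to the barrier, project $U$ onto $\varphi_t$ and analyse the scalar ODE $f''=\tfrac{n^2}{4}f-\tilde\rho(t)$ with $0\le\tilde\rho(t)\le\delta(t)f(t)$ via a weighted ($e^{(n/2-\varepsilon)t}$) bootstrap, a further iteration improving the rate to the sharp $u(x)\le C\,d(x)$. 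One may also phrase the argument through a B\^ocher‑type representation (Lemma \ref{lem:bocher}) for the perturbed operator $-\Delta-a$ with $a=n(n-2)u^{4/(n-2)}=o(|x|^{-2})$, the hypothesis forcing the coefficient of the singular mode $\asymp d(x)|x|^{-n}$ to vanish.
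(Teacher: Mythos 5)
Your argument is correct in spirit and essentially dual to the paper's: both are barrier/maximum-principle arguments, and your cylindrical barrier $h_T(t)\theta_n=\Psi(t_0)\bigl(e^{-\beta(t-t_0)}+e^{\beta(t-T)}\bigr)\theta_n$ is, after the change of variables $t=-\ln|x|$, $U=|x|^{(n-2)/2}u$, precisely the paper's Euclidean barrier $\phi_\va=\al\frac{x_n}{|x|}+\va\frac{x_n}{|x|^{n-1}}$ with $\beta=\frac{n-2}{2}$ (note $\frac{(n-2)^2}{4}+ (n-1)=\frac{n^2}{4}$, so $\al x_n/|x|$ is the supersolution that dominates a zeroth-order perturbation of size $\delta|x|^{-2}$ exactly when $\delta<n-1$, which is the same threshold you derive). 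What the paper buys by staying in Euclidean coordinates is a single clean application of the Hardy-inequality maximum principle (Lemma \ref{lem:mp}) on $Q_\tau$, with the outer-sphere comparison supplied by the boundary Harnack inequality (Lemma \ref{lem:boundary-nonsing}); it neither needs the passage through Remark \ref{rem:full}/Corollary \ref{cor:partial bound} and the $o$-upgrade, nor the subsequent bootstrap.

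Where your writeup falls short is exactly the point you flag yourself: the curved lateral boundary. The barrier computation with $\theta_n$ is only valid when $\Omega_t=\mathbb{S}^{n-1}_+$; for the actual slices $\Omega_t$, replacing $\theta_n$ by the $t$-dependent eigenfunction $\varphi_t$ introduces the $\partial_t\varphi_t,\partial_{tt}\varphi_t$ terms, and the resulting error $O(e^{-t})(|h'|+|h|)$ is not dominated by the main negative term $(\beta^2-\frac{n^2}{4}+c)h\varphi_t$ in the boundary layer where $\varphi_t$ is small. You name three ways to repair this but carry none of them through; the second one (Kelvin transform to make $\psi$ convex, so that $\Gamma_\tau\subset\{x_n\ge0\}$ and the Euclidean barrier is automatically nonnegative on the bottom boundary) is precisely what the paper does, and is the shortest fix. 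As written, the argument is therefore complete only for $\psi\equiv0$; to match the statement you need to actually execute one of your proposed remedies, and once you do, your proof collapses to the paper's.

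One smaller imprecision: $\Psi(t)$ is defined with $\mathrm{dist}(\theta,\pa\Omega_s)$ but then compared with $\theta_n$; on the hemisphere the two are comparable ($\theta_n\le\mathrm{dist}\le\frac{\pi}{2}\theta_n$), so this is harmless, but it is cleaner to define $\Psi$ directly with $\theta_n$ (or $\varphi_t$) in the denominator to make the boundary comparison at $t=t_0,T$ immediate.
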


Proposition \ref{prop:removable} is included in Theorem 7.1 of \cite{BPV}. We provide

\begin{proof}[Another proof of Proposition \ref{prop:removable}] Since the critical equation is conformally invariant, we may assume $Q_\sigma$ is convex for some small $\sigma>0$, otherwise one may preform a kelvin transform centered in $Q_1$.   For any $0<\mu\le n$, we have
\[
\Delta \frac{x_n}{|x|^\mu}= -\mu(n-\mu) |x|^{-(\mu+2)} x_n.
\]
For any $\va>0$, let
\[
\phi_\va= \al \frac{x_n}{|x|}+\va \frac{x_n}{|x|^{n-1}},
\]
where $\al>0$ is a constant to be fixed. Hence,
\[
(\Delta +(n-1)|x|^{-2}) \phi_\va=0 \quad \mbox{in }B_{1}^+, \quad \phi_\va(x',0)=0 \quad \mbox{for }x'\neq 0.
\]
By \eqref{eq:vanishing}, for any $\delta>0$ there exists $\tau>0$ such that
\[
a(x): =n(n-2) u(x)^{\frac{4}{n-2}} \le \delta |x|^{-2}\quad \mbox{for }|x|\le \tau<\sigma.
\]
Therefore, we have
\[
(\Delta +a(x)) (\phi_\va-u)(x)=-((n-1)|x|^{-2}-a(x)) \phi_\va(x)<0 \quad \mbox{in }Q_\tau.
\]
Choose $\delta<n-1$ small and thus the assumptions in Lemma \ref{lem:mp} are satisfied. Thinks to Lemma \ref{lem:boundary-nonsing}, one can choose  $\al$ such that $\al \frac{x_n}{|x|} \ge u$ on $\pa B_{\tau}^+\cap Q_{\tau}$. Since $Q_\tau$ is convex, $\phi_\va\ge 0=u$ on the bottom boundary of $Q_\tau$.
By \eqref{eq:vanishing}, we have
\[
\liminf_{x\to 0} (\phi_\va-u)(x)\ge 0.
\]
It follows from Lemma \ref{lem:mp} that $u\le \phi_\va$ in $Q_{\tau}$ for all $\va>0$. Sending $\va\to 0$, we have
\[
u\le \al \frac{x_n}{|x|} \quad \forall~x\in Q_{\tau}.
\]
It follows that $0$ is a removable singularity. We complete the proof.

\end{proof}

\section{Asymptotic symmetry and proof of Theorem \ref{thm:1}}
\label{sec:symmetry}

\begin{prop}\label{prop:movingsphere-1} Let $u\in C^2(\bar Q_1 \setminus \{0\})$ be a nonnegative solution of \eqref{eq:main}. Suppose that $\psi$ is concave and $0$ is a non-removable singularity. Then there exists $\va>0$ such that for every $x=(x',x_n)\in \Gamma_1$ with $|x'|<\va$ there holds
\[
u_{x_,\lda}(y)\le u(y) \quad \forall~ 0<\lda<|x'|, ~ y\in Q_{3/4}\setminus B_{\lda}^+(x),
\]
where
\[
u_{x_,\lda}(y):= \left(\frac{\lda}{|y-x|}\right)^{n-2} u(x+\frac{\lda^2 (y-x)}{|y-x|^2}).
\]

\end{prop}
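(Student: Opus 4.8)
The plan is to run the method of moving spheres of Li--Zhu \cite{LZhu} (see also Li--Zhang \cite{LZhang}) with the sphere centred at the boundary point $x\in\Gamma_1$, carrying the extra weight imposed by the Dirichlet condition. First I would extend $u$ by $0$ to $B_1\setminus\bar Q_1$; since $u=0$ on $\Gamma_1\setminus\{0\}$ and $u>0$ in $Q_1$ (strong maximum principle), the extended $u$ is a nonnegative subsolution of $-\Delta u\le n(n-2)u^{\frac{n+2}{n-2}}$ on $B_{3/4}\setminus\{0\}$, and by the conformal invariance of the equation so is $u_{x,\lda}$ on $B_{3/4}\setminus\{x\}$, with $u_{x,\lda}(y)=0$ whenever $K_{x,\lda}(y):=x+\lda^2(y-x)|y-x|^{-2}\notin Q_1$. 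Hence $w_\lda:=u-u_{x,\lda}$ satisfies
\[
-\Delta w_\lda\ge c_\lda(y)\,w_\lda\quad\text{in }Q_{3/4}\setminus\bar B_\lda^+(x),\qquad w_\lda=0\ \text{ on }\ \partial B_\lda(x)\cap Q_{3/4},
\]
with $c_\lda\ge0$ locally bounded ($n(n-2)$ times a difference quotient of $s\mapsto s^{\frac{n+2}{n-2}}$); the goal is to prove $w_\lda\ge0$ there for every $0<\lda<|x'|$, provided $|x'|$ is small.

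The concavity of $\psi$ is what makes the geometry cooperate. Since then $\psi\le0$ and $\psi(tx')\ge t\psi(x')$ for $t\in[0,1]$, the singular point $0$ stays strictly outside $\bar B_\lda(x)$ for $0<\lda<|x'|$, its image $K_{x,\lda}(0)=(1-\lda^2|x|^{-2})x$ lies in $\R^n\setminus Q_1$ or on $\Gamma_1$ at positive distance from $\{0\}$, and --- possibly after the harmless auxiliary inversion already performed before Proposition \ref{lem:2.1} --- $K_{x,\lda}$ does not carry $\Gamma_1\cap B_{3/4}$ back into $Q_{3/4}$. It follows that $u_{x,\lda}$ vanishes on $\Gamma_{3/4}$ outside a small neighbourhood of $x$, that $\liminf_{y\to0}w_\lda(y)\ge0$, and that $u$ dominates $u_{x,\lda}$ near $0$: because $0$ is non-removable, $u$ is large near $0$ ($u/d$ blows up, by Proposition \ref{prop:removable} and the Harnack inequality of Corollary \ref{cor:partial bound}), whereas $u_{x,\lda}/d$ near $0$ is controlled by $u/d$ at the image point $K_{x,\lda}(0)$, which sits at a fixed positive distance from the singularity.

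To \emph{start}, I would show $w_\lda\ge0$ in $Q_{3/4}\setminus B_\lda^+(x)$ for $0<\lda<\lda_0(x)$ small. Away from $x$ this is automatic: $u_{x,\lda}(y)\le C\lda^{\,n-2}d(y)$ (the Kelvin weight and Lemma \ref{lem:boundary-nonsing} applied to $u$ near $K_{x,\lda}(y)$), while $u(y)\ge d(y)/c_0$ on compact annuli by Lemma \ref{lem:boundary-sing}, and near $\Gamma_1$ one compares the quotients $w_\lda/d$ via Lemma \ref{lem:boundary-nonsing}. Near $x$, outside $B_\lda(x)$, I would use $w_\lda=0$ on $\partial B_\lda(x)$ together with the standard monotonicity of the Kelvin transform (the normalized spherical averages of $u$ about $x$ are nondecreasing near $x$), exactly as in \cite{LZhu,LZhang}; the residual set on which $w_\lda$ could be negative lies in $B_{C\lda}(x)$, where, after rescaling $\lda\mapsto1$, the coefficient $c_\lda$ is controlled and the maximum principle on domains of small volume of Berestycki--Nirenberg \cite{BN} (with Lemma \ref{lem:mp} near $0$) forces $w_\lda\ge0$.

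For the \emph{continuation}, set $\bar\lda(x):=\sup\{\mu\in(0,|x'|]:w_\lda\ge0\text{ in }Q_{3/4}\setminus B_\lda^+(x)\text{ for all }0<\lda<\mu\}$, positive by the previous step; the claim is $\bar\lda(x)=|x'|$. If $\bar\lda(x)<|x'|$, then $w_{\bar\lda}\ge0$ and, by the strong maximum principle, either $w_{\bar\lda}\equiv0$, or $w_{\bar\lda}>0$ in $Q_{3/4}\setminus\bar B_{\bar\lda}^+(x)$ with strictly positive radial derivative on $\partial B_{\bar\lda}(x)\cap Q_{3/4}$ (Hopf) and $w_{\bar\lda}\ge d/c_0$ near $\Gamma_{3/4}$ (Lemma \ref{lem:boundary-sing}). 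The equality case is impossible: $u\equiv u_{x,\bar\lda}$ would force $u$ to vanish on the interior hypersurface $K_{x,\bar\lda}(\Gamma_1)\cap Q_1$ unless $\Gamma_1$ is a hyperplane near $x$ --- in the first case $u\equiv0$ by the strong maximum principle and unique continuation, in the second $u(y)\to0$ as $y\to0$ so $0$ is removable --- both contradicting the hypotheses. In the remaining case, combining the scale-invariant a priori bounds of Corollary \ref{cor:partial bound}, the boundary Harnack inequality (Lemma \ref{lem:boundary-nonsing}) and the lower bound (Lemma \ref{lem:boundary-sing}) --- all uniform for $\lda$ in compact subintervals of $(0,|x'|)$ --- with the continuity of $\lda\mapsto u_{x,\lda}$, the usual argument (Hopf near the freed sphere, a margin for $w_\lda$ in the bulk, the small-volume maximum principle near $x$, Lemma \ref{lem:mp} near $0$) keeps $w_\lda\ge0$ for $\lda\in[\bar\lda(x),\bar\lda(x)+\eta]\subset(0,|x'|)$, a contradiction; hence $\bar\lda(x)=|x'|$. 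The step I expect to fight with is exactly this boundary bookkeeping: transferring every maximum-principle comparison to the quotient $w_\lda/d$ via the boundary Harnack inequality, controlling how the inversion $K_{x,\lda}$ moves the merely $C^2$ hypersurface $\Gamma_1$ (it is the concavity of $\psi$ that keeps $K_{x,\lda}(\Gamma_1)$ outside $Q_{3/4}$ and the Dirichlet data compatible), and keeping all estimates uniform as $\lda$ is pushed up to, but kept strictly below, the threshold $|x'|$, at which $K_{x,\lda}(0)\to0$ and the picture degenerates.
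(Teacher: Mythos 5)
Your overall scheme (moving spheres centered at boundary points, start for small $\lda$, continue via strong maximum principle and narrow-domain technique, with the boundary Harnack inequality carrying the $d(y)$-weights) is indeed the strategy of the paper. But there is a specific quantitative ingredient that your plan omits, and without it the argument does not close.

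The paper's proof of the proposition is structured as (a) a \emph{uniform in $\lda$} comparison on the outer sphere $\partial''B^+_{3/4}$ for all $0<\lda<|x'|$, (b) the start for small $\lda$, and (c) the continuation. Step (a) is not cosmetic: it is what rules out the degenerate case $u\equiv u_{x,\bar\lda_x}$ in step (c), and its proof requires the scale-invariant partial upper bound $u(z)\le C(\gamma)\,d(z)\,|z|^{-n/2}$ of Proposition \ref{lem:2.1}/Corollary \ref{cor:partial bound}, applied to the \emph{image} point $z=K_{x,\lda}(y)$, whose cone ratio $d(z)/|z|$ is kept below $3/22$ by an explicit computation. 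Only then does one get $u_{x,\lda}(y)\le C\lda^n|x|^{-n/2}y_n\le C|x|^{n/2}y_n$, which beats the Lemma \ref{lem:boundary-sing} lower bound $u(y)\ge c_0 y_n$ once $|x|<\va$ is small, \emph{uniformly for $\lda$ up to $|x'|$}. Your version invokes only Lemma \ref{lem:boundary-nonsing}, whose constant depends on $\|u\|_{L^\infty}$ near the image point, i.e.\ grows like $|x|^{-(n-2)/2}$ as $x\to 0$; that estimate starts the sphere but does not carry it past a $\lda$-threshold strictly below $|x'|$ in low dimension, nor does it give the strict outer-boundary inequality needed when $\bar\lda_x$ approaches $|x'|$. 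In short: you cite the lower bound $u\ge d/c_0$ but never the matching scale-invariant upper bound, and that asymmetry is the gap.

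Two smaller points. The extension of $u$ by zero across $\Gamma_1$ is not used in the paper and is not harmless as stated: the extension is only Lipschitz, so $-\Delta u\le n(n-2)u^{\frac{n+2}{n-2}}$ in $\mathcal D'(B_{3/4}\setminus\{0\})$ requires a sign check on the surface term $-\partial_\nu u\,\delta_{\Gamma_1}$, and all subsequent comparisons (which hinge on the quotient $w_\lda/d$, not $w_\lda$ itself) are more cleanly run inside $Q_{3/4}$ as the paper does. Second, you rule out $w_{\bar\lda_x}\equiv 0$ by unique continuation; the paper does it directly via the strict inequality from step (a) on the outer sphere, which is both simpler and exactly what that step is for. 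Finally, your worry about $\liminf_{y\to 0}w_\lda(y)$ is not where the singularity actually intervenes: in the paper's argument, the narrow-domain estimate is run only in annuli $B^+_r(x)\setminus B^+_\lda(x)$ with $r<|x|$, bounded away from the origin; near $0$ the comparison is settled by the pointwise estimate from (a)/(b), not by an integral argument or a $\liminf$ claim.
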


\begin{proof} To present our idea more clear, let us assume $\psi=0$ at the moment.   The proposition is proved as long as the three steps have been through:
\begin{itemize}
\item[(a).] There exists $0<\va<1/10$ such that for every $x=(x',0)$ with $|x'|<\va$
\[
u_{x_,\lda}(y)< u(y) \quad \forall~ 0<\lda<|x'|, ~ y\in\pa'' B_{3/4}^+.
\]
\item[(b).] There exists $0<\lda_1<|x|$ such that
\[
u_{x_,\lda}(y)\le u(y) \quad \forall~ 0<\lda<\lda_1, ~ y\in B_{3/4}^+\setminus B_{\lda}^+(x).
\]
\item[(c).] Let
\[
\bar \lda_x= \sup\{0<\lda<|x|: u_{x_,\mu}(y)\le u(y) \quad \forall~ 0<\mu<\lda, ~ y\in B_{3/4}^+\setminus B_{\mu}^+(x)\}.
\]
Then $\bar \lda_x= |x|$.
\end{itemize}

\emph{Proof of (a).} By Lemma \ref{lem:boundary-sing}, there exists a constant $c_0>0$ such that
\be\label{eq:22lowerbound}
u(y)\ge c_0y_n \quad \mbox{on }B_{3/4}^+.
\ee
For $0<\lda<|x|<\va, ~y\in \pa'' B_{3/4}$, we have
\[
\left|x+\frac{\lda^2 (y-x)}{|y-x|^2}\right| \ge |x|-\frac{20}{13} |x|^2 \ge \frac{11}{13}|x|,
\]
and
\[
\frac{\lda^2 y_n }{|y-x|^2} \le \frac{20}{13} |x|^2 y_n.
\]
Thus
\[
\frac{(x+\frac{\lda^2 (y-x)}{|y-x|^2})_n}{|x+\frac{\lda^2 (y-x)}{|y-x|^2}|}\le \frac{15}{11} |x|<\frac{3}{22}.
\]
It follows from Corollary \ref{cor:partial bound} with $\gamma=\frac{3}{22}$ that
\[
u(x+\frac{\lda^2 (y-x)}{|y-x|^2}) \le C y_n |x|^{-\frac{n-4}{2}}.
\]
Hence
\[
u_{x,\lda}(y)= \left(\frac{\lda}{|y-x|}\right)^{n-2} u(x+\frac{\lda^2 (y-x)}{|y-x|^2}) \le C y_n \lda^{n-2} |x|^{-\frac{n-4}{2}} \le Cy_n  |x|^{\frac{n}{2}} < c_0 y_n,
\]
provided
\[
0<\va<(\frac{c_0}{C})^{\frac{2}{n}}.
\]

\emph{Proof of (b).} For any fixed $0<|x|<\va$, we claim there exist $0<\lda_3<\lda_2<|x|$ such that
\be \label{eq:27}
u_{x,\lda}(y)\le u(y) \quad \forall~ 0<\lda\le \lda_3, ~ y\in B_{3/4}^+(x)\setminus B_{\lda_2}^+(x).
\ee
Indeed, for every $0<\lda_2<|x|$ and every $y\in B_{3/4}^+\setminus  B_{\lda_2}^+(x) $, we have $x+\frac{\lda^2(y-x)}{|y-x|^2}\in B_{\lda_2}^+(x)$. Hence,
\begin{align*}
u_{x,\lda}(y)&= \left(\frac{\lda}{|y-x|}\right)^{n-2} u(x+\frac{\lda^2(y-x)}{|y-x|^2})\\&
\le \left(\frac{\lda}{|y-x|}\right)^{n-2}   \frac{\lda^2 y_n}{|y-x|^2}\sup_{z\in B_{\lda_2}^+(x)} \frac{u(z)}{z_n}\\&
 \le c_0y_n\le u(y)
\end{align*}
where $c_0$ is the constant in \eqref{eq:22lowerbound},
\[
0<\lda_3=  \lda_2 \Big( c_0\Big/ \sup_{z\in B_{\lda_2}^+(x)} \frac{u(z)}{z_n}\Big)^{1/n},
\]
and Lemma \ref{lem:boundary-nonsing} has been used. Therefore, \eqref{eq:27} is confirmed.

We are going to use the \emph{narrow domain technique} to conclude that the remaining case: $u_{x,\lda}\le u$ in $ B_{\lda_2}^+(x)\setminus B_{\lda}^+(x)$.

By \eqref{eq:27},  $u_{x,\lda}\le u$ on $\pa\big( B_{\lda_2}^+(x)\setminus B_{\lda}^+(x)\big)$ for all $0<\lda<\lda_3$. Multiplying both sides of the equation
\[
-\Delta (u_{x,\lda}-u)=n(n-2)(u_{x,\lda}+\tau u)^{\frac{4}{n-2}}(u_{x,\lda}-u) \quad \mbox{in }  B_{\lda_2}^+(x)\setminus B_{\lda}^+(x)
\]
by $(u_{x,\lda}-u)^+$ and integrating by parts, where we used mean value theorem and $0\le \tau=\tau(x)\le 1$, we have, using H\"older inequality,
\begin{align}
&\int_{ B_{\lda_2}^+(x)\setminus B_{\lda}^+(x)} |\nabla(u_{x,\lda}-u)^+|^2 \nonumber \\&=n(n-2) \int_{ B_{\lda_2}^+(x)\setminus B_{\lda}^+(x)}(u_{x,\lda}+\tau u)^{\frac{4}{n-2}}|(u_{x,\lda}-u)^+|^2\nonumber  \\&
\le C(n) \left(\int_{B_{\lda_2}(x)} u^{\frac{2n}{n-2}}\,\ud y\right)^{2/n} \left( \int_{ B_{\lda_2}^+(x)\setminus B_{\lda}^+(x)} |(u_{x,\lda}-u)^+|^{\frac{2n}{n-2}} \right)^{(n-2)/n},
\end{align}
where $C(n)=n(n-2) 2^{\frac{4}{n-2}}$.
Since $(u_{x,\lda}-u)^+ \in H_0^1(B_{\lda_2}^+(x)\setminus B_{\lda}^+(x))$, by Sobolev inequality we have
\[
\int_{ B_{\lda_2}^+(x)\setminus B_{\lda}^+(x)} |\nabla(u_{x,\lda}-u)^+|^2  \ge \frac{1}{S(n)}  \left( \int_{ B_{\lda_2}^+(x)\setminus B_{\lda}^+(x)} |(u_{x,\lda}-u)^+|^{\frac{2n}{n-2}} \right)^{(n-2)/n},
\]
where $S(n)>0$ depends only dimension $n$. Choosing $\lda_2$ small to ensure
\be \label{eq:lda_2}
C(n) \left(\int_{B_{\lda_2}(x)} u^{\frac{2n}{n-2}}\,\ud y\right)^{2/n}  \le \frac{1}{2S(n)} ,
\ee we obtain
\[
 \frac{1}{2S(n)}  \left( \int_{ B_{\lda_2}^+(x)\setminus B_{\lda}^+(x)} |(u_{x,\lda}-u)^+|^{\frac{2n}{n-2}} \right)^{(n-2)/n} \le 0,
\]
which implies $u_{x,\lda}\le u$ in $ B_{\lda_2}^+(x)\setminus B_{\lda}^+(x)$ because $(u_{x,\lda}-u)^+$ is continuous in $B_{\lda_2}^+(x)\setminus B_{\lda}^+(x)$.
Let $\lda_1=\lda_3$ and we complete the proof.

\emph{Proof of (c).}  By the previous step, we see that $\bar \lda_x$ is well defined. If $\bar \lda_x<|x|$, we have $u-u_{x,\bar \lda_x}\ge 0$ in $B_{3/4}^+\setminus B_{\bar \lda_x}(x)$. By item (a) and strong maximum principle $u-u_{x,\bar \lda_x}> 0$ in $B_{3/4}^+\setminus \bar B_{\bar \lda_x}(x)$.
It follows from Lemma \ref{lem:boundary-sing} that for every $r>\bar  \lda_x $ there exists $c_r>0$ such that
\be\label{eq:24lowerbound}
(u-u_{x,\bar \lda_x})(y)\ge c_r y_n \quad  \forall~ |y-x|\ge r, ~y\in B_{3/4}.
\ee
For $y\in B_{3/4}$ with $|y-x|\ge r$ and $\bar \lda_x<\lda<r\le \frac{|x|+\bar \lda_x}{2}$, making use of Lemma \ref{lem:boundary-nonsing} we have
\begin{align}
|u_{x,\bar \lda_x}(y)-u_{x,\lda}(y)| &\le |y-x|^{2-n}|\bar \lda_x^{n-2}-\lda^{n-2}| u(x+\frac{\bar \lda_x^2(y-x)}{|y-x|^2})\nonumber\\& \quad +(\frac{\lda}{|y-x|})^{n-2}|u(x+\frac{\bar \lda_x^2(y-x)}{|y-x|^2})-u(x+\frac{\lda^2(y-x)}{|y-x|^2})|\nonumber \\&
\le C (\lda-\bar \lda_x)^\al y_n, \label{eq:comparison}
\end{align}
where $\al\in (0,1)$ and $C\ge 1$ depend only on $n$ and $\|u\|_{L^\infty(B_{(|x|+\bar \lda_x)/2}(x))}$,    and we have used
\[
u(x+\frac{\lda^2(y-x)}{|y-x|^2})\le C \frac{\lda^2y_n}{|y-x|^2}
\]
and
\begin{align*}
&|u(x+\frac{\bar \lda_x^2(y-x)}{|y-x|^2})-u(x+\frac{\lda^2(y-x)}{|y-x|^2})|\\& = \frac{\bar \lda_x^2 y_n}{|y-x|^2} \left|\frac{u(x+\frac{\bar \lda_x^2(y-x)}{|y-x|^2})}{\frac{\bar \lda_x^2 y_n}{|y-x|^2}}- \frac{u(x+\frac{\lda^2(y-x)}{|y-x|^2})}{\frac{\bar \lda_x^2 y_n}{|y-x|^2}} \right|\\&
\le C y_n \left|\frac{u(x+\frac{\bar \lda_x^2(y-x)}{|y-x|^2})}{\frac{\bar \lda_x^2 y_n}{|y-x|^2}}- \frac{u(x+\frac{\lda^2(y-x)}{|y-x|^2})}{\frac{ \lda^2 y_n}{|y-x|^2}} \right|+Cy_n \frac{\lda^2-\bar \lda_x^2}{\bar \lda_x^2} \\&
\le C(\lda-\bar\lda_x)^{\al} y_n.
\end{align*}
Let $\delta>0$ satisfy
\be\label{eq:delta}
C \delta^\al<\frac{1}{2}c_r, \quad \delta<\frac{|x|-\bar \lda_x}{2}.
\ee
Then by \eqref{eq:24lowerbound} we have for all $\bar \lda_x \le   \lda\le \bar \lda_x+\delta$
\be\label{eq:25lowerbound}
(u-u_{x,\lda})(y)\ge \frac12c_r y_n \quad  \forall~ |y-x|\ge r,~ y\in B_{3/4}.
\ee
This implies that $u-u_{x,\lda}\ge 0$ on $\pa (B_{r}^+(x)\setminus B_{\lda}^+(x))$.  Using \emph{narrow domain technique} as before,  we immediately obtain
\[
u \ge u_{x,\lda} \quad \mbox{in }B_{r}^+(x)\setminus B_{\lda}^+(x),
\] whenever $r$ is chosen such that
\be \label{eq:r}
n(n-2) 2^{\frac{4}{n-2}} \left(\int_{B_{r}^+(x)\setminus B_{\bar\lda_x}^+(x)} u^{\frac{2n}{n-2}}\right)^{2/n}\le
\frac{1}{2S(n)}.
\ee
In conclusion,
\[
u_{x,\lda}(y)\le u(y) \quad \forall~  \bar \lda_x \le \lda\le \bar \lda_x+\delta, ~ y\in B_{3/4}^+\setminus B_{\lda}^+(x).
\]
This contradicts to the definition of $\bar \lda_x$. Hence, $\bar \lda_x =|x|$.

Therefore, we proved Proposition \ref{prop:movingsphere-1} when $\psi=0$.

If $\psi\neq 0$ is concave, we note that for each  $y\in \Gamma_1 \cap B_\lda(x)$, where $x\in \Gamma_1$, $|x'|<\va$ and $\lda<|x'|$, whenever $|y'+ \frac{\lda^2(y'-x')}{|y-x|^2}| \le \frac{3}{4} $ then $y+ \frac{\lda^2(y-x)}{|y-x|^2}\in Q_{3/4}$. Hence, with a little modification  of the above proof for case  $\psi=0$, we complete the proof of Proposition \ref{prop:movingsphere-1}.

\end{proof}

\begin{proof}[Proof of Proposition  \ref{prop:2}] We are going to show that for all $x\in \pa \R^n_+$, $x\neq 0$
there holds
\be\label{eq:goal-1}
u_{x,\lda}(y)\le u(y) \quad \forall~ 0<\lda<|x|, ~ |y-x|\ge \lda.
\ee
The idea is the same as that of the proof of Proposition \ref{prop:movingsphere-1}.

\emph{Step 1.} We prove that \eqref{eq:goal-1} holds for all $0<\lda<\lda_1$ with $\lda_1>0$ small.

Corresponding to the step (a) of the proof of Proposition \ref{prop:movingsphere-1},
by Lemma \ref{lem:exterior} and Lemma \ref{lem:boundary-nonsing} we have for  $0<\lda_3<\lda_2<|x|$
\be\label{eq:30lowerbound}
u(y)\ge \frac{\lda_2^n y_n}{|y-x|^{n}} \inf_{y\in \pa'' B_{\lda_2}^+(x), y_n>0} \frac{u(y)}{y_n}
\ee
and
\[
\inf_{y\in \pa'' B_{\lda_2}^+(x), y_n>0} \frac{u(y)}{y_n}  >0.
\]
It follows from Lemma \ref{lem:boundary-nonsing} that
\begin{align*}
u_{x,\lda}(y)&= \left(\frac{\lda}{|y-x|}\right)^{n-2} u(x+\frac{\lda^2(y-x)}{|y-x|^2})\\&
\le  C\left(\frac{\lda}{|y-x|}\right)^{n-2} \frac{\lda^2 y_n}{|y-x|^2} =C \frac{\lda^n y_n}{|y-x|^n}.
\end{align*}
In view of \eqref{eq:30lowerbound}, by setting $\lda_3\le \lda_2/C^{1/n}$ we showed that
\[
u_{x,\lda}(y)\le u(y) \quad \mbox{for }0<\lda<\lda_3, ~ |y-x|\ge \lda_2.
\]
As in the step (b) of the proof of Proposition \ref{prop:movingsphere-1}, by narrow domain technique we can prove easily that
\[
u_{x,\lda}(y)\le u(y) \quad \forall~ 0<\lda<\lda_3, \lda\le |y-x|\le \lda_2,
\]
where $\lda_2$ is selected to ensure \eqref{eq:lda_2}.

\emph{Step 2.} Define
\[
\bar \lda_x= \sup\{0<\lda<|x|: u_{x_,\mu}(y)\le u(y) \quad \forall~ 0<\mu<\lda, ~ y\in \R^n_+\setminus B_{\mu}^+(x)\}.
\]
By the previous step, $\bar \lda_x>0$ is well defined. We shall prove $\bar \lda_x=|x|$. If not, i.e., $\bar \lda_x<|x|$, we want to show that there exists $0<\delta<\frac{|x|-\bar \lda_x}{2}$ such that \eqref{eq:goal-1} holds for all $0<\lda<\bar \lda_x+\delta$. This obviously contradicts to the definition of $\bar \lda_x$.

By the definition of $\bar \lda_x$, we have $u-u_{x,\bar \lda_x}\ge 0$ in $\R^n_+\setminus B_{\bar\lda_x}(x)$ and thus
\[
-\Delta(u-u_{x,\bar \lda_x})\ge 0.
\]
Since $0$ is a non-removable singularity of $u$, we have $\limsup_{y\to 0}(u-u_{x,\bar \lda_x})(y) =\infty$. By strong maximum principle, we have
\[
u-u_{x,\bar \lda_x}> 0 \quad \mbox{in }\R^n_+\setminus \bar B_{\bar\lda_x}(x).
\]
By Lemma \ref{lem:exterior} and Lemma \ref{lem:boundary-nonsing},
\be\label{eq:31lowerbound}
u(y)\ge \frac{r^n y_n}{|y-x|^{n}} \inf_{y\in \pa'' B_{r}^+(x), y_n>0} \frac{u(y)}{y_n}
\ee
and
\[
\inf_{y\in \pa'' B_{r}^+(x), y_n>0} \frac{u(y)}{y_n}  >0
\]
for every $r>\bar \lda_x$. $r$ will be fixed to ensure \eqref{eq:r} when using the \emph{narrow domain technique}.  Choosing $0<\delta<\frac{|x|-\bar \lda_x}{2}$ sufficiently small ensures that \be\label{eq:comparision-2}
|u_{x,\lda}(y)-u_{x,\bar \lda_x}(y)|\le \frac{1}{2}  \frac{r^n y_n}{|y-x|^{n}} \inf_{y\in \pa'' B_{r}^+(x), y_n>0} \frac{u(y)}{y_n} \quad \forall~y\in \R^n_+, |y|\ge r, \bar \lda_x\le \lda\le \bar \lda_x+\delta.
\ee
Indeed, notice that $x+\frac{\lda^2(y-x)}{|y-x|^2}\in B_{\frac{|x|+\bar \lda_x}{2}}^+(x)$. By  Lemma \ref{lem:boundary-nonsing} and computing as in deriving \eqref{eq:comparison} we have
\begin{align*}
|u_{x,\bar \lda_x}(y)-u_{x,\lda}(y)| &\le |y-x|^{2-n}|\bar \lda_x^{n-2}-\lda^{n-2}| u(x+\frac{\bar \lda_x^2(y-x)}{|y-x|^2})\\& \quad +(\frac{\lda}{|y-x|})^{n-2}|u(x+\frac{\bar \lda_x^2(y-x)}{|y-x|^2})-u(x+\frac{\lda^2(y-x)}{|y-x|^2})|\\&
\le C \delta^\al \frac{y_n}{|y-x|^n},
\end{align*}
where $\al\in (0,1)$ and $C\ge 1$ depend only on $n$ and $\|u\|_{L^\infty(B_{(|x|+\bar \lda_x)/2})}$. Hence, \eqref{eq:comparision-2} holds by setting
\[
C \delta^\al \le \frac{r^n}{2}\inf_{y\in \pa'' B_{r}^+(x), y_n>0} \frac{u(y)}{y_n}.
\]
Hence,
\[
u_{x,\lda}(y)\le u(y) \quad \forall~ y\in \R^n_+, |y|\ge r, \bar \lda_x\le \lda\le \bar \lda_x+\delta<r.
\]
By \emph{narrow domain technique}, the above inequality holds for all $y\in \R^n_+$ with $|y|\ge \lda$. Therefore, step 2 is finished.

Let $e=(e',0)\in \R^n$ be an arbitrary unit vector, $a>0$ constant, and $y\in \R^n_+$ satisfying $ye-a<0$,  \eqref{eq:goal-1} holds for $x= Re$ and $\lda =R-a$: \[
u(y)\ge (\frac{R-a}{|y-Re|})^{n-2}u(x+\frac{(R-a)^2(y-Re)}{|y-Re|^2}).
\]
Sending $R\to \infty$, we have
\[
u(y)\ge u(y-2(y\cdot e-a)e)=u(y'-2(y'\cdot e'-a)e',y_n).
\]
Proposition  \ref{prop:2} follows immediately.

\end{proof}

\begin{prop}\label{prop:sym} Let $u\in C^2(\bar Q_1 \setminus \{0\})$ be a nonnegative solution of \eqref{eq:main}. Then there exists a constant $\bar \delta >0$ depends on the $C^2$ norm of $\psi$ such that for $x\in Q_{1/2}$ with $\frac{d(x)}{|x|}<\gamma<1$ and $x_n>\frac{1}{\delta}\max_{|y'|=|x'|}|\psi(y')|$ we have
\be\label{7'}
u(x',x_n)=\bar u(|x'|,x_n)(1+O(|x|)),
\ee
where $\bar u(x',x_n)=\dashint_{\mathbb{S}^{n-2}}u(|x'|\theta, x_n)\,\ud \theta$ and $O(|x|)\le C(\gamma)|x|$ as $x\to 0$.

\end{prop}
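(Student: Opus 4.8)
The plan is to extract \eqref{7'} from the one-sided moving-sphere inequality of Proposition~\ref{prop:movingsphere-1}, following the scheme of Li--Zhu \cite{LZhu} and Li--Zhang \cite{LZhang}. First I would reduce to the hypotheses of that proposition. If $0$ is removable then $u$ extends to a solution in some $Q_\sigma$, lies in $C^{1,\al}(\bar Q_\sigma)$, vanishes at $0$, and by the Hopf lemma $\pa_{x_n}u(0)=:c>0$; since $\nabla\psi(0)=0$, both $u(x)$ and $\bar u(|x'|,x_n)$ equal $cx_n$ up to lower-order terms on the set $x_n>\frac1{\bar\delta}\max_{|y'|=|x'|}|\psi(y')|$, and \eqref{7'} follows (this case is also contained in \cite{BPV}). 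So assume $0$ is non-removable. Since the equation is conformally invariant, a conformal transformation (cf. Section~\ref{sec:partialbound}) fixing the $x_n$-axis makes $\psi$ concave near $0$ while changing $u$, and the averaging in \eqref{7'}, only by a factor $1+O(|x|)$ near $0$. Hence I may assume $\psi$ is concave and Proposition~\ref{prop:movingsphere-1} is available: for each boundary point $x=(x',\psi(x'))$ with $0<|x'|<\va$ and each $0<\lda\le|x'|$,
\[
u_{x,\lda}(y)\le u(y),\qquad y\in Q_{3/4}\setminus B_{\lda}^+(x),
\]
the value $\lda=|x'|$ being included by continuity.

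The mechanism for producing symmetry is that these inversions generate (approximate) rotations about the $x_n$-axis. For $\lda$ equal to, or just below, $|x'|$, write $\Phi_x$ for the corresponding inversion; its derivative at the origin is, up to $O(|x'|)$, the reflection $I-2\hat x\otimes\hat x$ of $\R^n$ across $\hat x^{\perp}$, where $\hat x=x'/|x'|\in\mathbb S^{n-2}\subset\{x_n=0\}$, and in particular it fixes $e_n$. Any rotation $\mathcal R\in SO(n-1)$ of the $x'$-variables (extended to $\R^n$ by fixing $x_n$) is a product of a bounded, even number of such reflections, so one can pick boundary points $x_1,\dots,x_m$ and, because the Kelvin transform is order preserving, iterate the displayed inequality to obtain
\[
(\text{conformal factor})\cdot u\big(\Phi_{x_m}\circ\cdots\circ\Phi_{x_1}(y)\big)\le u(y)
\]
for $y$ near $0$, where $\Psi:=\Phi_{x_m}\circ\cdots\circ\Phi_{x_1}$ is a conformal map with $\Psi(0)\approx0$ and $D\Psi(0)\approx\mathcal R$, and the conformal factor is close to $1$ near $0$.

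Now fix an admissible $z$ with $|z|=r$ small and a point $\tilde z$ with $|\tilde z'|=|z'|$, $\tilde z_n=z_n$; choose $\mathcal R\in SO(n-1)$ with $\mathcal R z=\tilde z$ and build $\Psi$ as above. Then $\Psi(z)$ and $\tilde z$ differ by a controlled amount and both lie at distance $\approx r$ from $0$, so, after rescaling by $r$ and using \eqref{eq:ub-2}, \eqref{eq:harnack} and Lemma~\ref{lem:boundary-nonsing} (uniform $C^1$ control of the rescaled $u/d$ up to the rescaled boundary, which converges in $C^2$ to the half-space boundary since $\psi\in C^2$), the iterated inequality yields $u(\tilde z)\le(1+O(r))u(z)$. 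Running the same argument with $\mathcal R^{-1}$ gives the reverse bound, so $u(\tilde z)=(1+O(r))u(z)$ uniformly over the orbit $\{|y'|=|z'|,\ y_n=z_n\}$; averaging over the orbit gives $\bar u(|z'|,z_n)=(1+O(r))u(z)$, and undoing the conformal change of variables yields \eqref{7'}. The hypotheses $d(z)/|z|<\gamma$ and $z_n>\frac1{\bar\delta}\max_{|y'|=|x'|}|\psi(y')|$ enter exactly to keep every partial image $\Phi_{x_j}\circ\cdots\circ\Phi_{x_1}(z)$ inside the region $Q_{3/4}\setminus B_{\lda}^+(x_j)$ where the inequality holds and to guarantee $d(z)\approx z_n$, so that Corollary~\ref{cor:partial bound} and Lemma~\ref{lem:boundary-nonsing} can be applied along the chain.

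The main obstacle I anticipate is the interplay, in the last two paragraphs, between \emph{geometry} and \emph{rate}: one must choose the boundary centres $x_j$ and their common radius so that the factorisation of $\mathcal R$ into reflections is compatible with keeping all partial images of $z$ on the correct side of each sphere — delicate since $z$ sits near $0$, which lies near $\pa B_{\lda}^+(x_j)$ — while simultaneously bookkeeping the errors coming from $\Psi(0)\neq0$, from $D\Psi(0)\neq\mathcal R$, from the conformal factors, and from the $O(|x|)$ discrepancy between $\Gamma$ and its tangent hyperplane, sharply enough that they sum to $O(|z|)$ rather than merely $o(1)$. This last point is what forces the use of the quantitative estimates \eqref{eq:ub-2}, \eqref{eq:harnack} together with the $C^2$-regularity of $\psi$, instead of the bare boundary Harnack inequality.
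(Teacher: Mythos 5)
Your route and the paper's are genuinely different. Rather than composing inversions to approximate a rotation and iterating the one-sided inequality, the paper uses a \emph{single} inversion: writing $x_\al$, $x_\beta$ for the maximum and minimum points of $u(\cdot,x_n)$ on $\{|x'|=r\}$, it chooses one boundary centre $x_\gamma$ a fixed distance $\va/4$ past $x_\al$ in the direction $x_\al'-x_\beta'$ and then solves for $\tilde x_\beta=(x_\beta',t)$ and $\lda<|x_\gamma|$ so that the inversion sends $\tilde x_\beta$ \emph{exactly} to $x_\al$. Then $\tilde x_\beta$ differs from $x_\beta$ only in the $x_n$-coordinate, by $O(r\,x_n)$, so the crude bound $|\nabla u_s|\lesssim u_s/\tilde d$ already gives $u(\tilde x_\beta)/u(x_\beta)=1+O(r)$, and the conformal factor is computed exactly: $\bigl(|\tilde x_\beta-x_\gamma|/\lda\bigr)^{n-2}=\bigl(1+4|x_\al'-x_\beta'|/\va\bigr)^{(n-2)/2}=1+O(r)$. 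No approximation enters.

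The bookkeeping you flag as the main obstacle is, I believe, a genuine gap and not merely a technicality. With concave $\psi\not\equiv0$ the inversion centres sit at $x_j=(x_j',\psi(x_j'))$ with $\psi(x_j')=O(|x_j'|^2)$, and Proposition \ref{prop:movingsphere-1} caps the radius at $\lda_j\le|x_j'|<|x_j|$. Keeping $|x_j'|\sim\rho$ of a fixed size (which you need so that $(\lda_j/|y-x_j|)^{n-2}=1+O(r/\rho)$ is $1+O(r)$) therefore forces $\lda_j/|x_j|=1-O(\rho^2)$, so the composed conformal factor carries a multiplicative error $1-O(\rho^2)$ that does not shrink as $r\to0$; simultaneously $\Phi_{x_j}(0)=x_j(1-\lda_j^2/|x_j|^2)\neq0$ has normal component $O(\rho^4)$, and dividing by $d(\tilde z)\sim x_n$, which the hypothesis bounds below only by $\sim\max_{|y'|=r}|\psi|\sim r^2$, again leaves an error that is not $O(r)$. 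Shrinking $\rho$ with $r$ to cure these constant errors destroys the $1+O(r/\rho)$ estimate on the remaining conformal factor, so no scale $\rho$ works. Even for $\psi\equiv0$, the tangential linearisation error $O(r^2/\rho)$ becomes an $O(r)$ relative error only through the sharper tangential estimate $|\nabla'(u/d)|\lesssim(u/d)/|x|$; the bounds \eqref{eq:ub-2} you cite give only $|\nabla u|/u\lesssim1/d$, hence the weaker rate $O(r^2/(\rho\,x_n))$. The paper's single, carefully targeted inversion is designed precisely to avoid all three of these: the comparison point is displaced only vertically and only by $O(r\,x_n)$, and the conformal factor is $1+O(r)$ by an exact algebraic identity rather than a linearisation.
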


\begin{proof} Suppose first that $\psi$ is concave.
For $r>0$,  let $ x_\al=(x_\al',x_n)$ and $x_\beta=( x'_\beta,x_n)$ be two points in $Q_{\va}$ with $x_n>\max_{|y'|=r}\psi(y')$ such that
\[
u( x_\al)=\max_{|x'|=r} u(x',x_n)\quad \mbox{and} \quad u( x_\beta)=\min_{|x'|=r} u(x',x_n),
\]
where $\va$ is the one in Proposition \ref{prop:movingsphere-1}. Let
\[
x_\gamma'=x'_\al+\frac{\va(x'_\al-x'_\beta)}{4| x'_\al- x'_\beta|}\quad \mbox{and} \quad  x_\gamma=(x'_\gamma,\psi(x_\gamma')).
\]
We want to find $
\tilde x_\beta= (x_\beta', t) $ and $\lda>0$ such that
\[
x_\gamma+\frac{\lda^2 (\tilde x_\beta-x_\gamma)}{|\tilde x_\beta-x_\gamma|^2}=x_\al.
\]
It follows that
\[
t= \frac{4}{\va}(x_n-\psi(x_\gamma'))|x_\al'-x_\beta'|+x_n
\]
and
\[
\lda^2 =\frac{\va (|x_\beta'-x_\gamma'|^2+\va^{-2}(x_n-\psi(x_\gamma'))^2(4|x_\al'-x_\beta'|+\va)^2)}{4|x_\al'-x_\beta'|+\va} .
\]
It is easy to check that there exist positive  constants $\bar r$ and $\bar C(\va)$, depending only on $\va$ and $\psi(x_\gamma')$, such that  that if $|x_n|\le \bar C(\va)\sqrt{r}$ and $r<\bar r$ then $\lda^2<|x_\gamma|^2$.
By Proposition \ref{prop:movingsphere-1}, we have
\[
(\frac{\lda }{|\tilde x_\beta-x_\gamma |})^{n-2} u(x_\al)=u_{x_\gamma, \lda}(\tilde x_\beta)\le u(\tilde x_\beta).
\]

Let $s=\sqrt{r^2+x_n^2}$ and $u_s(x)=s^{\frac{n-2\sigma}{2}} u(s x)$. Then
\be\label{eq:u-s}
-\Delta u_s=n(n-2)u_s^{\frac{n+2}{n-2}}=:V(x) u_s \quad \mbox{in }\tilde Q_2\setminus  \tilde Q_{1/2},\quad u_s=0 \quad \mbox{on }\tilde \Gamma _2\setminus \tilde \Gamma_{1/2},
\ee
where $V(x):= n(n-2) u_s^{\frac{4}{n-2}}$ and $\tilde Q_R$ and $\tilde \Gamma_R$ are the scalings of $Q_{sR}$ and $\Gamma(sR)$. Let $\om_\gamma=\{x\in \tilde Q_{3/2}\setminus  \tilde Q_{3/4}:\frac{ d_{sx}}{s|x|}<\gamma\}$.  By elliptic estimates to the boundary and Lemma \ref{lem:boundary-nonsing}, we have
\[
|\nabla u_s(\frac{1}{s}(x_\beta+\theta  \tilde x_\beta))|\le \frac{ C \sup_{\om_\gamma} u_s}{dist(\frac{1}{s}(x_\beta+\theta  \tilde x_\beta), \tilde \Gamma _2\setminus \tilde \Gamma_{1/2})}\le \frac{C u_s(\frac{1}{s} \tilde x_\beta)}{dist(\frac{1}{s} \tilde x_\beta, \tilde \Gamma _2\setminus \tilde \Gamma_{1/2})}
\]
for every $\theta\in (0,1)$, where $C$ depends only on $n, \psi$ and the constant $C(\frac{\gamma+1}{2})$ in Proposition \ref{lem:2.1}.
Since $|\tilde x_\beta- x_\beta|=\frac{4 (x_n-\psi(x_\gamma'))|x_\al'-x_\beta'|}{\va}$, by mean value theorem  we have
\[
|\frac{u(x_\beta)}{u(\tilde x_\beta)}-1| \le Cr.
\]
Hence,
\begin{align*}
u(x_\al)& \le u(x_\beta) (1+C r)(\frac{|\tilde x_\beta-x_\gamma |}{\lda })^{n-2}\\&=
u(x_\beta)(1+C  r)(\frac{4|x_\al'-x_\beta'|+\va}{\va})^{\frac{n-2}{2}}\\&
=u(x_\beta)(1+O(r)).
\end{align*}
Therefore, the proposition is proved if $\psi $ is concave.

If $\psi$ is not concave, let $B_\rho(\rho e_n)$ be an inner tangential ball of $Q_1$ contacting $Q_1$ at $0$, where $\rho>0$ for $\Gamma_1$ is of $C^2$.
Let
\[
\tilde \phi(y)= \rho e_n+\frac{\rho^2 (y-\rho e_n)}{|y-\rho e_n|^2} , \quad v(y)=(\frac{\rho}{|y-\rho e_n|})^{n-2} u(\rho e_n+\frac{\rho^2 (y-\rho e_n)}{|y-\rho e_n|^2}),
\]
$D=\phi^{-1} Q_1$ and $\Lda= \phi^{-1} \Gamma_1$. Then $0\in \Lda$, $D$ is concave at $0$ and
\[
\Delta v=n(n-2)v^{\frac{n+2}{n-2}} \quad \mbox{in }D, \quad v= 0 \quad \mbox{on }\Lda.
\]
Note that $|x|\le C(\rho )|y|$ for $|y|\le \frac{\rho}{100}$, and
\[
\frac{\rho}{|y-\rho e_n|}=\rho^{-1}|\frac{\rho^2 (y-\rho e_n)}{|y-\rho e_n|}+\rho e_n-\rho e_n|= |\rho^{-1} x-e_n|=1+O(|x|),
\]
where $x=\frac{\rho^2 (y-\rho e_n)}{|y-\rho e_n|}+\rho e_n $.
By what we proved for concave $\psi$, the proposition follows immediately.

\end{proof}

\begin{prop}\label{prop:sym'} Suppose that $u$ is a solution of \eqref{eq:main} and $u(x)\le C d(x) |x|^{-\frac{n}{2}}$. Then there exists a constant $\bar \delta >0$ depends on the $C^2$ norm of $\psi$ such that
\be\label{7''}
u(x',x_n)=\bar u(|x'|,x_n)(1+O(|x|)) \quad \mbox{as }x\to 0 \mbox{ with }x_n>\frac{1}{\delta}\max_{|y'|=|x'|}|\psi(y')|.
\ee

\end{prop}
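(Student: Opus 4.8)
The plan is to re-run the proof of Proposition \ref{prop:sym}, the extra global bound letting us drop the restriction $d(x)/|x|<\gamma$ entirely.

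Since $0\in\Gamma_1$ we have $d(x)\le|x|$, so the hypothesis $u(x)\le C\,d(x)|x|^{-n/2}$ forces $u(x)\le C|x|^{(2-n)/2}$ on $Q_{1/4}$; by Remark \ref{rem:full}, Corollary \ref{cor:partial bound} then holds with $\gamma=1$. In particular the bounds \eqref{eq:ub-1}, \eqref{eq:ub-2} and the scale-invariant boundary Harnack inequality \eqref{eq:harnack} are valid on all of $Q_{1/4}$ (respectively for all $x,y$ in each dyadic annulus $Q_{2s}\setminus Q_{s/2}$), with \emph{no} restriction on the ratio $d(x)/|x|$. If $0$ is a removable singularity, then $u\in C^2(\bar Q_1)$ with $u(0)=0$, and \eqref{7''} follows directly from the first-order expansion of $u$ at the vertex together with the Hopf lemma (this is the easy case); so assume henceforth that $0$ is not removable.

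After a Kelvin transform centered at an interior point --- exactly as in the proof of Proposition \ref{prop:sym} --- I may assume $\psi$ is concave, so that Proposition \ref{prop:movingsphere-1} applies and furnishes, for some $\va>0$, the moving-sphere inequalities $u_{x,\lambda}(y)\le u(y)$ for $x\in\Gamma_1$, $|x'|<\va$, $0<\lambda<|x'|$, $y\in Q_{3/4}\setminus B_\lambda^+(x)$. Now I carry out the comparison step of Proposition \ref{prop:sym} verbatim: for small $r$ and $x_n$ with $x_n>\frac1\delta\max_{|y'|=r}|\psi(y')|$, choose $x_\alpha,x_\beta$ on $\{|x'|=r\}\times\{x_n\}$ realizing $\max_{|x'|=r}u(\cdot,x_n)$ and $\min_{|x'|=r}u(\cdot,x_n)$; pick the auxiliary boundary point $x_\gamma$, the reflected point $\tilde x_\beta$ and $\lambda$ so that the sphere of radius $\lambda$ about $x_\gamma$ maps $\tilde x_\beta$ to $x_\alpha$; verify $\lambda<|x_\gamma|$ for $r$ small; apply the moving-sphere inequality to get $(\lambda/|\tilde x_\beta-x_\gamma|)^{n-2}u(x_\alpha)\le u(\tilde x_\beta)$; and finally, after rescaling by $s=\sqrt{r^2+x_n^2}$ and using the boundary gradient estimate from the $\gamma=1$ version of Corollary \ref{cor:partial bound} together with Lemma \ref{lem:boundary-nonsing}, conclude $u(\tilde x_\beta)=u(x_\beta)(1+O(|x|))$ and $(|\tilde x_\beta-x_\gamma|/\lambda)^{n-2}=1+O(|x|)$, hence $u(x_\alpha)\le u(x_\beta)(1+O(|x|))$. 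Since $\bar u(|x'|,x_n)$ lies between $u(x_\beta)$ and $u(x_\alpha)$, this yields \eqref{7''}. Returning to a general (not necessarily concave) $\psi$ is then handled exactly as in the last paragraph of the proof of Proposition \ref{prop:sym}: reflect through an interior tangent ball $B_\rho(\rho e_n)$, use $|x|\asymp|y|$ near $0$ and the fact that the conformal factor $(\rho/|y-\rho e_n|)^{n-2}$ equals $1+O(|x|)$, and absorb this factor into the error term.

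The only place this proof genuinely differs from that of Proposition \ref{prop:sym} --- and the point requiring care --- is verifying that the entire comparison configuration (existence of $\tilde x_\beta$ and of $\lambda<|x_\gamma|$, and the bound $u(\tilde x_\beta)=u(x_\beta)(1+O(|x|))$ coming from the boundary gradient estimates) is available on the full region $x_n>\frac1\delta\max_{|y'|=|x'|}|\psi(y')|$ rather than merely on the cone-type subregion $d(x)/|x|<\gamma$ used in Proposition \ref{prop:sym}; this is exactly what the upgrade to the $\gamma=1$ estimates provided by Remark \ref{rem:full} supplies, since those are precisely the bounds on $u$, $\nabla u$ and the Harnack quotient that no longer involve $d(x)/|x|$. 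Everything else is a routine transcription of Section \ref{sec:symmetry}.
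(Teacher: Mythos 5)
Your overall plan is right in spirit --- the hypothesis $u\le Cd(x)|x|^{-n/2}$ upgrades Corollary \ref{cor:partial bound} to $\gamma=1$ via Remark \ref{rem:full}, and this is what lets one cover the full region $x_n>\frac{1}{\delta}\max_{|y'|=|x'|}|\psi(y')|$ --- but the execution has a genuine gap. You claim to run the moving-sphere comparison of Proposition \ref{prop:sym} ``verbatim'' and to ``verify $\lambda<|x_\gamma|$ for $r$ small.'' This verification is impossible when $x_n$ is not small compared with $r=|x'|$. Concretely (taking $\psi\equiv0$ for clarity), with $d=|x_\alpha'-x_\beta'|$, $a=\va/4$, one computes $|x_\gamma'|^2=r^2+\tfrac{\va d}{4}+\tfrac{\va^2}{16}$ and $\lambda^2=(4d+\va)\bigl(\tfrac{\va}{16}+\tfrac{x_n^2}{\va}\bigr)$, so
\[
|x_\gamma'|^2-\lambda^2=r^2-\frac{(4d+\va)\,x_n^2}{\va},
\]
which is positive only when $x_n\lesssim r$. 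The target region, however, contains points with $x_n\gg r$ (e.g.\ $x_n\sim\sqrt{r}$ as $r\to0$, which is perfectly compatible with $x_n>\frac{1}{\delta}\max_{|y'|=r}|\psi(y')|\sim r^2$). For such $x$ one has $\lambda>|x_\gamma'|$, Proposition \ref{prop:movingsphere-1} cannot be invoked, and the comparison step collapses. The strengthened bounds from Remark \ref{rem:full} control $u$, $\nabla u$ and Harnack quotients uniformly, but they do nothing to repair this geometric obstruction in the moving-sphere configuration.

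The paper handles exactly this complementary regime by a different, simpler device: when $x_n\ge\frac{1}{\tilde C}\sqrt{r}$, the circle $\{|x'|=r\}\times\{x_n\}$ lies, after rescaling by $s=\sqrt{r^2+x_n^2}$, at a uniformly positive distance from the boundary of the scaled annulus, so the interior Harnack inequality and interior gradient estimate for $u_s$ (uniform precisely because the $\gamma=1$ bound gives $u_s\le C$) yield
\[
\bigl|u_s(\tfrac1s x_\alpha)-u_s(\tfrac1s x_\beta)\bigr|\le C\,u_s(\tfrac1s x_\beta)\,\frac{|x_\alpha-x_\beta|}{s}\le C\,u_s(\tfrac1s x_\beta)\,s,
\]
where the last step uses $|x_\alpha-x_\beta|\le 2r\le 2\tilde C^2 s^2$. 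This gives $u(x_\alpha)=u(x_\beta)(1+O(s))$ directly, with no reflection at all. Your proof needs this (or some equivalent argument) to handle $x_n\gtrsim\sqrt{r}$; the moving-sphere step as written does not reach that region.

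A secondary point: your treatment of the removable case by ``first-order expansion at the vertex together with the Hopf lemma'' is also not as immediate as stated. One gets $u(x)=c_0(x_n-\psi(x'))(1+O(|x|))$ and $\bar u=c_0(x_n-\bar\psi(|x'|))(1+O(|x|))$, so the ratio carries an extra factor $1+\frac{\bar\psi(|x'|)-\psi(x')}{x_n-\bar\psi(|x'|)}$ of size $O(|x'|^2/x_n)$, and showing this is $O(|x|)$ on the whole region again requires the restriction $x_n\gtrsim\sqrt{r}$ together with the observation that the complementary region is already covered by Proposition \ref{prop:sym}. It is not a free consequence of Taylor expansion.
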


\begin{proof} By the proof of Proposition \ref{prop:sym}, we only consider concave  $\psi$ and $x_n\ge \frac{1}{\tilde  C}\sqrt{|x'|}$ for some $\tilde  C>0$.

For $r>0$ and $x_n\ge \frac{1}{\tilde  C}\sqrt{r}$,  let $ x_\al=(x_\al',x_n)$ and $x_\beta=( x'_\beta,x_n)$ be two points  such that
\[
u( x_\al)=\max_{|x'|=r} u(x',x_n)\quad \mbox{and} \quad u( x_\beta)=\min_{|x'|=r} u(x',x_n).
\]
Let $u_s$ satisfy \eqref{eq:u-s} with $s=\sqrt{r^2+|x_n|^2}$. By mean value theorem, Harnack inequality and interior estimates, we have
\[
|u_s(\frac{1}{s}x_\al)-u_s(\frac{1}{s}x_\beta)|\le Cu_s(\frac{1}{s}x_\beta) \frac{1}{s}|x_\al-x_\beta|\le Cu_s(\frac{1}{s}x_\beta)s,
\]
where we used $x_n\ge \frac{1}{\tilde  C}\sqrt{r}$.
It follows that
\[
u(x_\al)=u(x_\beta)(1+O(s)).
\]
We complete the proof.

\end{proof}

\begin{proof}[Proof of Theorem \ref{thm:1}] The first part of the theorem follows from Proposition \ref{lem:2.1} and Proposition \ref{prop:sym}.
The second part follows from Proposition \ref{prop:lowbound}, Proposition \ref{prop:removable} and Proposition \ref{prop:sym'}.
\end{proof}

\small

\bigskip

\noindent J. Xiong

\noindent School of Mathematical Sciences, Beijing Normal University\\
Beijing 100875, China\\[1mm]
Email: \textsf{jx@bnu.edu.cn}

\end{document}